\newcommand\Laf{\mathrm{Laf}}
\newcommand\iLaf{\mathrm{iLaf}}
\newcommand{\Spec}{\mathrm{Spec}}
\newcommand{\Hilb}{\mathrm{Hilb}}
\newcommand{\GL}{\mathrm{GL}}
\newcommand{\rad}{\mathrm{rad}}
\newcommand{\sur}{\twoheadrightarrow}
\newcommand\cT{\mathcal{T}}
\newcommand\tr{\mathrm{tr}}
\newcommand{\Sets}{\mathrm{Sets}}
\newcommand{\amo}{\stackrel{a}{\sim}}
\newcommand{\en}{\textrm{End}}
\newcommand{\fs}{\mathfrak{s}}
\newcommand{\ir}{\mathrm{Irr}}
\newcommand{\Hom}{\mathrm{Hom}}
\newcommand{\ed}{\mathrm{End}}
\newcommand{\Sym}{\mathrm{Sym}}
\newcommand{\fR}{\mathfrak{R}}
\newcommand{\bz}{\vec{\mathbf{z}}}
\newcommand{\cs}{\mathbf{sc}}
\newcommand{\bv}{\Omega}
\newcommand{\mm}{\mathcal{M}}
\newcommand{\oo}{\mathcal{O}}
\newcommand{\om}{\Omega}
\newcommand{\f}{\mathcal{F}}
\newcommand{\fB}{\mathfrak{B}}
\newcommand{\bG}{\mathbf{G}}
\newcommand{\bP}{\mathbf{P}}
\newcommand{\cH}{\mathcal{H}}
\newcommand{\cE}{\mathcal{E}}
\newcommand{\cO}{\mathcal{O}}
\newcommand{\cR}{\mathcal{R}}
\newcommand{\cQ}{\mathcal{Q}}
\newcommand{\cG}{\mathcal{G}}
\newcommand{\bA}{\mathbb{A}}
\newcommand{\Z}{\mathbb{Z}}
\newcommand{\N}{\mathbb{N}}
\newcommand{\C}{\mathbb{C}}
\newcommand{\R}{\mathbb{R}}
\newcommand{\git}{\mathbin{/\mkern-5mu/}}
\newcommand{\nex}{\bigwedge^{n}}
\newcommand{\cha}{\mathrm{char}}
\newcommand{\hc}{\cH_{\fs}}
\newcommand{\un}{\mathcal{Q}}
\newcommand{\Quot}{\mathrm{Quot}}
\newcommand{\maps}{\rightarrow}
\newtheoremstyle{mybold}
{3pt}
{3pt}
{}
{0pt}
{\bfseries}
{.}
{.5em}
{}
\newtheorem{theorem}{Theorem}[section]
\newtheorem{lemma}[theorem]{Lemma}
\newtheorem{definition}[theorem]{Definition}
\newtheorem{proposition}[theorem]{Proposition}
\newtheorem{example}[theorem]{Example}
\newtheorem{corollary}[theorem]{Corollary}
\theoremstyle{mybold}
\newtheorem{remark}[theorem]{Remark}
\title{The Lafforgue variety and irreducibility of induced representations}
\author{Kostas I. Psaromiligkos}
\address{UFR de Mathématiques, Université Clermont Auvergne, 3 place Vasarely, Clermont-Ferrand, 63170 Aubière, France. email:
	konstantinos.psaromiligkos@uca.fr.}
\keywords{}
\date{\today} % clear date 
\begin{document}

	\begin{abstract} We construct the Lafforgue variety, an affine scheme equipped with an open dense subscheme parametrizing the simple modules of a non-commutative unital algebra $R$ over any field $k$, provided that the center $Z(R)$ is finitely generated and $R$ is finitely generated as a $Z(R)$-module. Our main technical tool is a generalization of the Hilbert scheme for non-commutative algebras, which may be of independent interest.
		
	Applying our construction in the case of Hecke algebras of Bernstein components, we derive a characterization for the irreducibility of induced representations in terms of the vanishing of a generalized discriminant on the Bernstein variety. We explicitly compute the discriminant in the case of an Iwahori-Hecke algebra of a split reductive $p$-adic group. 
	\end{abstract} 
	\maketitle 
	\tableofcontents
	
	\section{Introduction}
	\subsection{Bernstein theory}
	
	The category of smooth representations $\mm(G)$ of a reductive $p$-adic group $G$ is not semisimple. Instead, there is a splitting of $\mm(G)$ as a direct product of indecomposable categories by virtue of the Bernstein decomposition theorem  	
	$$\mm(G)\cong \prod_{\fs\in \fB(G)} \mm_{\fs}(G),$$ indexed by the set of connected components $\fB(G)$ of the Bernstein variety $\om(G)=\bigsqcup_{\fs\in \fB(G)} \om_{\fs}(G),$ see \cite[Proposition 2.10]{Bernstein1984}.
	
	In particular, to each smooth irreducible representation $\pi\in \ir(G)$ we can attach uniquely up to $G$-conjugation its cuspidal support $\cs(\pi):=(M,\sigma)_G$, where $M$ is a Levi subgroup of $G$ and $\sigma$ a supercuspidal irreducible representation of $M$ such that $\pi$ embeds in the parabolic induction $i_M^G(\sigma).$ Then, $\om(G)$ parametrizes the set of conjugation classes of possible cuspidal supports. As $i_M^G(\sigma)$ is of finite length, $\om(G)$ is also a finite-to-one parametrizing space for $\ir(G).$  We define $\ir^{\fs}(G)\subseteq \ir(G)$ to be the subset of irreducible representations with cuspidal support in $\fs$. The splitting provided by Bernstein's decomposition theorem induces the partition 
	\begin{equation}\label{berpart}
		\ir(G)=\bigsqcup_{\fs\in \fB(G)} \ir^{\fs}(G)
	\end{equation}
	on the level of irreducible objects.
	
	With the intent of studying $\mm(G)$, various versions of Hecke algebras have been introduced, with the property that their module category is equivalent to some subcategory of $\mm(G)$. The centers of Hecke algebras are isomorphic to subrings of the ring of regular functions on $\om(G).$ The study of the representation theory of $p$-adic reductive groups via Hecke algebras has provided an abundance of beautiful results, see for example \cite{Iwahori1965}, \cite{Kazhdan1987}, \cite{Aubert2021}.
	
	\subsection{The Lafforgue variety}
	
	In \cite{Lafforgue2016}, Laurent Lafforgue predicted the existence of an algebraic variety classifying smooth irreducible representations of a $p$-adic reductive group with regular functions being generated by traces over Hecke algebras.  
	
	More precisely, Lafforgue's proposed construction works as follows. For every smooth representation $(V,\pi)$ of $G$ we have $V=\displaystyle\bigcup\limits_{K} V^K$ where $K$ ranges over all compact open subgroups $K\leq G$ and $V^K$ denotes the subspace of $K$-fixed vectors. As a consequence of Bernstein's admissibility theorem, if $V$ is also irreducible, $V^K$ is finite dimensional. Let $\mm_K(G)$ be the full subcategory of smooth representations generated by $K$-fixed vectors and $\cH_K(G)$ the Hecke algebra of $K$-biinvariant locally constant compactly supported distributions on $G$ - see subsection 2.1. We have
	$$\mm_K(G)\cong \mm(\cH_K(G))$$
	where $\mm(\cH_K(G))$ denotes the module category of $\cH_K(G)$. Therefore, $\ir_K(G)\cong \ir(\cH_K(G)).$
	
	We consider for each $r\in \cH_K(G)$ a function $f_r: \ir(\cH_K(G))\rightarrow \C$ defined by $f_r(V)=\tr_V(r)$. Let $\cT_K$ be the ring of functions on $\ir(\cH_K(G))$ generated by all $f_r, r\in \cH_K(G)$, which we call the \textit{ring of traces}. The set $\ir(\cH_K(G))$ can be naturally embedded in $\Spec(\cT_K)$, since any $V\in \ir(\cH_K(G))$ gives a geometric point via the evaluation homomorphism. Lafforgue predicted that $\ir_K(G)\cong \ir(\cH_K(G))$ embeds as an open dense subscheme of $\Spec(\cT_K).$
	
	We prove the existence of this basic object in the following more general setting: Let $R$ be an associative unital $k$-algebra over any field $k$, such that the center $Z(R)$ is finitely generated and $R$ is finite as a $Z(R)$-module. Let $A$ be any subalgebra of $Z(R)$ such that $R$ is a finite $A$-module. An irreducible $R$-module will be finite dimensional by an argument similar to \cite[Theorem A.4]{Jantzen2002}. Therefore, if $\cha(k)=0$, by the same procedure we can define the ring of traces $T_R$ of $R$ over $A$. We define $\Laf_{R/A}:=\Spec(T_R)$ to be the \textit{Lafforgue variety}.	
	
	The subalgebra $A\subseteq Z(R)$, acts by a character on any simple module, and thus we get a natural projection $\Laf_{R/A}\rightarrow \Spec(A)$. 
	
	We can now state our main theorem, where we also remove any assumptions on $k$. Notice that our definition of the ring $T_R$ will be more involved in the case $\cha(k)>0$ - see subsection 3.3.
	
	\begin{restatable}{theorem}{main}\label{main}
		$\ir(R)$ forms the set of $\bar{k}$-points of a dense Zariski open subscheme $\iLaf_{R/A}$ of $\Laf_{R/A}.$ The projection $p: \Laf_{R/A}\rightarrow \mathrm{Spec}(A)$ is finite.
	\end{restatable}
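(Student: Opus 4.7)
The plan splits naturally into two parts: finiteness of $p$, and the identification of $\Irr(R)$ with the $k$-points of a dense Zariski open subscheme.

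For the finiteness statement, I would first verify the structure map $\phi\colon A\to T_R$ underlying $p$. By Schur's lemma, each $V\in\Irr(R)$ produces a central character $\chi_V\colon A\to k$; to see that the function $V\mapsto\chi_V(a)$ lies in $T_R$, note the identity $f_a=\chi_V(a)\cdot f_1$, and that $f_1(V)=\dim V$ takes only finitely many integer values (bounded by the $A$-rank of $R$). Thus $\prod_{d}(f_1-d)=0$ in $T_R$, which decomposes $\Spec T_R$ into a finite disjoint union indexed by $d$; on the piece where $f_1=d$ one has $\chi_V(a)=f_a/d\in T_R$, giving $\phi$. Next, if $r_1,\dots,r_n$ generate $R$ as an $A$-module, centrality gives $f_{ar_i}=\phi(a)f_{r_i}$, so $T_R$ is $\phi(A)$-generated by $f_{r_1},\dots,f_{r_n}$. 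Finally, each $f_{r_i}$ is integral over $\phi(A)$: by Cayley--Hamilton for finitely generated modules, $r_i$ satisfies a monic polynomial $p_i(x)\in A[x]$, and since $\dim V$ is uniformly bounded, $\tr_{r_i}(V)$ is a bounded symmetric function of roots of $p_i$ evaluated at $\chi_V$, hence satisfies a monic polynomial over $\phi(A)$. Combined with the generation statement, this yields finiteness.

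For the open embedding, the map $\Irr(R)\to\Laf_{R/A}(k)=\Hom(T_R,k)$, $V\mapsto \mathrm{ev}_V$, is injective because a simple $R$-module over an algebraically closed field is determined up to isomorphism by its trace character (Brauer--Nesbitt, with the determinantal analogue replacing it in positive characteristic). Given $\chi\in\Laf_{R/A}(k)$, the composite $\tau_\chi\colon R\to k$, $r\mapsto\chi(f_r)$, is a pseudo-character of dimension bounded by the $A$-rank of $R$. By a Procesi--Rouquier-type reconstruction theorem for module-finite algebras, $\tau_\chi$ is the trace character of a unique finite-dimensional semisimple $R$-module $V_\chi$. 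I would then set $\iLaf_{R/A}$ to be the locus where $V_\chi$ is simple; this is Zariski open because non-simplicity is a closed condition detected by the vanishing of a ``generalized discriminant''-type polynomial on $\Laf_{R/A}$, anticipating the discriminant introduced in the abstract. Density follows because on each irreducible component of $\Laf_{R/A}$, the locus of properly decomposable pseudo-characters is a proper closed subvariety, being a finite union of strata of strictly smaller dimension.

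The hardest part is the Procesi--Rouquier reconstruction at this level of generality together with the algebraicity of the simple locus via a discriminant; the finiteness argument, while technical, reduces to Cayley--Hamilton and standard symmetric-function considerations.
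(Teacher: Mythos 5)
Your route is genuinely different from the paper's. The paper never manipulates $T_R$ directly: it builds a non-commutative Hilbert (Quot) scheme $\Hilb_{R/A}$, proper over $\Spec(A)$, maps it by the trace into the affine generalized vector bundle $V_{R/A}=\Spec(\Sym_A(R))$, and defines $\Laf_{R/A}$ as the scheme-theoretic image; finiteness of $p$ is then the one-line observation that a proper affine morphism is finite, and openness of $\iLaf_{R/A}$ comes from properness of the nested Hilbert scheme $n\Hilb_{R/A}$, whose image (the locus of non-simple quotients) is therefore closed. Your finiteness argument --- $\phi(A)\subseteq T_R$ via the dimension idempotents, generation by $f_{r_1},\dots,f_{r_n}$, and integrality of each $f_{r_i}$ by Cayley--Hamilton plus symmetric functions of the roots --- is a correct and more elementary substitute in characteristic zero, and it has the virtue of working with the naive definition of $T_R$ from the introduction. (Note both the division by $d$ and the comaximality of the ideals $(f_1-d)$ silently use $\mathrm{char}\,k=0$.)

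The second half has two genuine gaps. First, openness: you assert that non-simplicity of $V_\chi$ is "detected by the vanishing of a generalized discriminant-type polynomial on $\Laf_{R/A}$," but the discriminant of this paper lives on $\Spec(A)$ and detects semisimplicity of the whole fibre algebra $R_a$, not reducibility of an individual module; a point of $X_0$ can perfectly well sit under several distinct simple modules and a non-semisimple $R_a$ still has simple quotients. The fact you actually need --- that the reducible locus in $\Laf_{R/A}$ is closed --- is true but requires the image-of-a-proper-map argument: either the nested Hilbert scheme as in the paper, or, in your language, the sum maps $\Laf^{d_1}_{R/A}\times_{\Spec(A)}\Laf^{d_2}_{R/A}\to\Laf^{d}_{R/A}$ between schemes finite over $\Spec(A)$. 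Your density argument is also off target: nothing in your setup rules out an irreducible component of $\Laf_{R/A}$ consisting entirely of reducible pseudocharacters, so "strata of strictly smaller dimension" is unjustified; with your definition $\Laf_{R/A}=\Spec(T_R)$ density is instead immediate, since $T_R$ is by construction a ring of functions on $\Irr(R)$, so no nonzero element vanishes on all of $\Irr(R)$. Second, positive characteristic: the reconstruction of a semisimple $V_\chi$ from the trace functional fails when $\mathrm{char}\,k\le\dim V_\chi$, and the trace ring itself is then the wrong object; the paper replaces $\Sym_A(R)$ by $\Sym_A(\Gamma^d_A R)$ and traces by degree-$d$ polynomial laws (determinants) following Roby, and your proposal does not engage with this beyond a parenthetical.
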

	
	The main difficulty in proving Theorem \ref{main} is that Lafforgue's proposed construction of the trace ring $T_R$ does not provide much information on its structure. What we thus need is a framework in algebraic geometry which gives rise to a more workable definition of the trace ring. This framework turns out to be a non-commutative generalization of the classical Hilbert-Chow morphism.
	
	Our strategy is as follows: we will first construct a non-commutative Hilbert scheme for the finite $A$-algebra $R$, which is a proper $A$-scheme $Q$. Next, using the trace (or the determinant in the positive characteristic case), we construct a morphism from the Hilbert scheme to an affine $A$-scheme $V$. The morphism $Q\to V$ can then be shown to factor through a closed subscheme of $V$ which is finite over $\Spec(A)$, of which the coordinate ring can be identified with $T_R$ in the case $\cha(k)=0$. 
	
	\subsection{The nested Quot scheme}
	
	To construct the non-commutative and nested non-commutative Hilbert schemes that we will need, which we believe may be of independent interest, we also need a nested version of the Quot functor $\mathrm{F}_n\Quot_{E,\bar{P}(t)}^{X/S}:(Sch/S)\rightarrow Sets$, see Definition \ref{nquotdef}.
	
	The representability of the Quot functor parametrizing flat families of quotients of a coherent sheaf, was proven in Grothendieck's seminal paper \cite{Grothendieck1961}. It is the basic building block for a lot of constructions in moduli space theory and many classical constructions as the Grassmannian and the Hilbert scheme are special cases of Quot schemes. For an exposition, we recommend Sernesi's book \cite[\S 4]{Sernesi2006}. We use the Quot scheme to construct the non-commutative Hilbert scheme. 
	
	In \cite[\S 4.5]{Sernesi2006}, the classical nested Hilbert functor and various versions of it are also proven to be representable. In \cite[Remark 4.5.4 (iv)]{Sernesi2006}, it is mentioned that a nested version of the Quot functor is also representable, but the result seems to not have appeared in the literature.
	
	While we found specific cases in the literature, as in \cite[Theorem 1]{CiocanFontanine1995}, \cite[Theorem 1]{Kim1995}, or more recently \cite[\S 1.1]{Monavari2022}, we could not find a reference for the general case.
	
	Since the existence of this basic object is important for our construction of the nested non-commutative Hilbert scheme, we prove in the Appendix that the nested Quot functor, see Definition \ref{nquotdef}, is also representable in the general case, following Sernesi's exposition. The following is Theorem \ref{nquot}.
	
	\begin{theorem}\label{nquoti}
		The nested Quot functor $\mathrm{F}_n\Quot_{E,\bar{P}(t)}^{X/S}:(Sch/S)\rightarrow Sets$ parametrizing flat families of nested quotients of $E$ of length $n$, is representable by a projective scheme over $S$.
	\end{theorem} 
	
	\subsection{Irreducibility of induced representations}
	
	We apply our results to the question of irreducibility for a parabolically induced representation from a cuspidal datum, a long studied subject, see for example \cite{Bernstein1976}, \cite{Muller1979}, \cite{Kato1981}, \cite{Kazhdan1987}. In particular, we consider the Hecke algebra $\cH_{\fs}(G)$ of a Bernstein component $\fs\in \fB(G)$, with the property that its module category $\mm(\hc(G))$ satisfies $$\mm_{\fs}(G)\cong \mm(\cH_{\fs}(G)).$$
	In \cite{Solleveld2022}, it was shown that $\cH_{\fs}$ is almost Morita equivalent, ie. the categories of finite-dimensional modules are equivalent, to a twisted affine Hecke algebra $H_{\fs}.$ 
	
	Applying Theorem \ref{main} to the case of the Hecke algebra $\cH_{\fs}$ of a Bernstein component $\fs \in \fB(G)$ and its center $Z_{\fs}$, we get a finite map 
	$$p: \Laf_{\cH_{\fs}/Z_{\fs}}\rightarrow \om_{\fs}$$ 
	from the Lafforgue variety to the Bernstein variety. When restricted to $\iLaf_{\cH_{\fs}/Z_{\fs}},$ it agrees with Bernstein's cuspidal support map upon identifying $\ir^{\fs}(G)$ with $\iLaf_{\cH_{\fs}/Z_{\fs}}(\C)$.
	
	Returning to the general case of any algebra $R$ satisfying our condition and a central subalgebra $A$, we stratify $\Spec(A)$ according to the cardinality of the fibers of $p$.  If $R=\hc$, $A=Z_{\fs}$, the parabolic induction $i_M^G(\sigma)$ from a cuspidal datum $(M,\sigma)_G\in \Spec(Z_{\fs})$ is irreducible if and only if $|p^{-1}(M,\sigma)|=1,$ ie. on the open dense stratum $X_0$ of the cardinality stratification.
	
	If $A$ is regular over a field $k$ of characteristic 0 and $R$ is also a locally free $A$-module, we have another concrete description of the stratification. Fixing a central character $\chi:A\rightarrow k$, a simple $R$-module with central character $\chi$ corresponds to an $R_{\chi}=\left(R\otimes_{A,\chi}k\right)$-module. We can then describe the stratification by studying the rank of the Jacobson radical of $R_{\chi}$. 
	
	We mainly apply this result to the open dense stratum $X_0$. We define a notion of a \textit{generalized discriminant} $d_{R/A}$, which is a principal ideal of $A$, with the property that the complement of its zero set in $\Spec A$ is $X_0$.	
	
	Induced representations are irreducible for generic cuspidal data, so they are irreducible exactly on $X_0$. We choose a regular central subalgebra $A\subseteq Z_{\fs}$ with $f:\Spec(Z_{\fs})\rightarrow \Spec(A)$ finite. Using the Jacobson stratification previously described, we prove the following.
	
	\begin{restatable}{theorem}{ind}\label{ind}
		Let $(M,\sigma)_G$ be a cuspidal datum. Then, $i_M^G(\sigma)$ is irreducible if and only if $(M,\sigma)\in X_0$. Outside of the singular locus $Z(f)$, this is equivalent to 
		$$d_{\cH_{\fs}/A}(f(M,\sigma))\neq 0.$$
	\end{restatable}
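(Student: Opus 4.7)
\medskip

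\noindent\textbf{Proof proposal.} The plan is to translate irreducibility of $i_M^G(\sigma)$ into a fiber-cardinality statement for the projection $p$, and then invoke the discriminant characterization of $X_0$ from the main results above.

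First I would identify the set-theoretic fiber $p^{-1}(f(M,\sigma))\subseteq \iLaf_{\cH_K/A}$ with the set of isomorphism classes of simple composition factors of $i_M^G(\sigma)^K$, regarded as an $\cH_K$-module. Smooth irreducible representations of $G$ admitting a nonzero $K$-fixed vector correspond bijectively to simple $\cH_K$-modules via $\rho\mapsto \rho^K$, and the cuspidal datum of $\rho$ is recorded exactly by its central character. Hence the fiber over $f(M,\sigma)$ is precisely the set of simple $\cH_K$-modules with cuspidal datum $(M,\sigma)$, which are exactly the composition factors of $i_M^G(\sigma)^K$.

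For the first equivalence, I would invoke the classical generic irreducibility of parabolic induction of cuspidal representations (Harish-Chandra, Bernstein) to conclude that the fiber of $p$ is a singleton on an open dense subset of each Bernstein component. Since $X_0$ is by construction the open dense stratum of the fiber-cardinality stratification, $X_0$ is precisely the fiber-size-one locus, so the direction ``irreducible $\Rightarrow$ fiber size one'' is immediate. For the converse I would use the Jacobson-radical description of the stratification established earlier: at a point of $X_0$, the finite-dimensional algebra $R_{\chi}=\cH_K\otimes_{A,\chi} k$ is semisimple with a single simple factor, so $i_M^G(\sigma)^K$ is isotypic of some multiplicity $m$. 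Combining the constancy of $\dim_k i_M^G(\sigma)^K$ across the Bernstein component (the universal family of inductions is locally free of constant rank) with the generic equality of this dimension and that of the unique generic simple constituent then forces $m=1$, hence irreducibility.

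The second equivalence is a direct consequence of the generalized discriminant: by construction $X_0\subseteq \Spec(A)$ is the open complement of the principal vanishing locus $V(d_{\cH_K/A})$, so $f(M,\sigma)\in X_0 \iff d_{\cH_K/A}(f(M,\sigma))\neq 0$. The restriction to the complement of $Z(f)$ is needed because the finite map $f:\Spec(Z_K)\to\Spec(A)$ is not in general an isomorphism, and only outside its degeneracy locus does membership in $X_0\cap \Spec(A)$ detect $X_0$-membership in $\Spec(Z_K)$ cleanly through $f$. I expect the main obstacle to be the converse direction of the first equivalence, specifically excluding the possibility of a unique simple composition factor appearing with multiplicity $m\geq 2$ at a point of $X_0$; this requires synthesizing the Jacobson stratification, the flatness of the universal family of parabolic inductions, and structural facts from the representation theory of reductive $p$-adic groups.
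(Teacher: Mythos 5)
Your first equivalence follows essentially the paper's route: identify $|p^{-1}(M,\sigma)|$ with the number of Jordan--H\"older constituents of $i_M^G(\sigma)$ and combine generic irreducibility with the fact that $X_0$ is the open dense stratum of the fiber-cardinality stratification. (The multiplicity issue you flag --- a single constituent occurring with multiplicity $m\geq 2$ --- is real, but your proposed fix does not close it: constancy of $\dim i_M^G(\sigma)^K$ over the component gives $m\cdot\dim V^K = d$ at the special point, and nothing you cite forces $\dim V^K$ to equal the generic value $d$ there. The paper itself simply equates $|JH(i_M^G(\sigma))|$ with the fiber cardinality and does not argue this point further.)

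The genuine gap is in the second equivalence, where you repeatedly conflate two different spaces and two different stratifications. The stratum $X_0$ of the theorem lives in $\Spec(Z_K)$ and is defined by the cardinality of the fibers of $p:\Laf_{\cH_K/Z_K}\to\Spec(Z_K)$; the discriminant $d_{\cH_K/A}$ lives on $\Spec(A)$, and Lemma \ref{vanish} identifies its nonvanishing locus with the open stratum $Y_0$ of the \emph{Jacobson} stratification of $\Spec(A)$, i.e.\ the locus where $\cH_K\otimes_A k(a)$ is semisimple. Your statement that ``by construction $X_0\subseteq\Spec(A)$ is the open complement of $V(d_{\cH_K/A})$'' is therefore not true by construction --- it is precisely the assertion that needs proof, and phrases like ``$X_0\cap\Spec(A)$'' and ``$p^{-1}(f(M,\sigma))$'' do not typecheck ($p$ has target $\Spec(Z_K)$, not $\Spec(A)$; likewise your claim that at a point of $X_0$ the algebra $R_\chi=\cH_K\otimes_{A,\chi}k$ is semisimple ``with a single simple factor'' is wrong, since the simple $R_\chi$-modules account for \emph{all} cuspidal data in $f^{-1}(\chi)$, generically $n=\deg(f)$ of them). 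The missing argument is the counting step: for generic $a\in\Spec(A)$ the fiber of $f\circ p$ has exactly $n$ points, one over each of the $n$ points of $f^{-1}(a)$; in general $|(f\circ p)^{-1}(a)|\geq n$ with equality if and only if $a\in Y_0$; and outside $Z(f)$ one has $|f^{-1}(a)|=n$ exactly, so equality holds if and only if each $p$-fiber over $f^{-1}(a)$ is a singleton. Only this chain ties membership of $(M,\sigma)$ in $X_0$ to $d_{\cH_K/A}(f(M,\sigma))\neq 0$, and it is absent from your proposal.
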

	
	We develop computational methods for generalized discriminants that work well with explicit presentations as in Solleveld's theorem \cite{Solleveld2022}. In particular, we explicitly calculate the discriminant for the unramified Bernstein component and the Iwahori-Hecke algebra of a split reductive $p$-adic group to retrieve results about irreducibility of principal series appearing, for example, in \cite{Kato1981}.

	\subsection{Outline}
	
	Section 2 is of expository nature and is only important for the applications. The reader interested only in the main theorem can skip it freely. We recall various versions of Hecke algebras, both to amend possible confusion stemming from the existence of multiple algebras going by that name in the literature and to make our treatment more self-contained.
	
	Section 3 is the main technical heart of the paper. We define our non-commutative generalizations of the Hilbert scheme and the Hilbert-Chow morphism. We use a trace map and carry out the proof of Theorem \ref{main} in the characteristic zero case. Then, we use the determinant map to treat the positive characteristic case. We also show the Lafforgue variety construction is independent of the auxiliary choice of a central subalgebra $A\subseteq Z(R)$. 
	
	In Section 4, assuming the algebra $R$ is Cohen-Macaulay and $k$ is characteristic $0$, we construct the Jacobson stratification with the purpose of studying explicitly the structure of the Lafforgue variety.
	
	In Section 5, we define generalized discriminants and study the case of Hecke algebras to prove Theorem \ref{ind}. We prove properties of discriminants to make them more amenable to calculation, including a generalization of the classical behavior of the discriminant in a tower of extensions of number rings to general commutative algebras, which doesn't seem to appear in the literature for our case, using the generalized Riemann-Hurwitz formula. In particular, we show the following.
	
	\begin{restatable}{lemma}{transd}\label{transd}
		For a tower of extensions $C/B/A$ such that $A,B$ are commutative and regular, $C$ is commutative, $C$ is free of rank $n$ as a $B$-module and $B$ is free as an $A$-module, we have that
		$$d_{C/A}=(d_{B/A})^n\cdot N_{B/A}(d_{C/B}),$$
		where $N_{B/A}$ is the norm function. 
	\end{restatable}
	
	As an application, we compute the discriminant for the case of an Iwahori-Hecke algebra of a split reductive $p$-adic group.
	
	In the Appendix, we recall the definitions of the Hilbert and Quot schemes, and prove the representability of the nested Quot functor.
	
	\subsection{Acknowledgements} 
 	
 	I warmly thank Anne-Marie Aubert, Micah Gay, Mathilde
 	Gerbelli-Gauthier, Philippe Gille, Aristides Kontogeorgis, Zhilin Luo, Siddharth Mahendraker, Benedict
 	Morrissey, Bao Ch\^au Ng\^o, Simon Riche, Minh-Tam Quang Trinh, Yiannis Sakellaridis, Maarten Solleveld
 	and Griffin Wang for their interest, comments and many helpful discussions.
 	
 	This work was part of my PhD thesis at the University of Chicago, advised by Bao Ch\^au
 	Ng\^o, to whom I am thankful for his mentorship and guidance throughout the process, as
 	well as suggesting this problem.
 	
 	This project has received funding from the European Research Council (ERC) under the European Union’s Horizon 2020 research and innovation programme (grant agreement No. 101002592). During my PhD, this work was also supported by a Graduate Research Fellowship from the Onassis foundation.
	
	\section{Hecke algebras}
	
	Let $\bG$ be a connected reductive group defined over a local non-archimedean field $F$ with ring of integers $\cO$ and uniformizer $\pi.$ We denote by $G:=\bG(F)$ the group of $F$-points, and follow the same convention for parabolic subgroups, Levi subgroup etc.
	
	\subsection{Hecke algebras of $p$-adic reductive groups}
	
	The \textit{Hecke algebra of $G$} is the non-unital algebra $\cH(G)$ of locally constant compactly supported distributions on $G$ under convolution. For a smooth representation $\pi\in \mm(G)$ with underlying vector space $V_{\pi}$ and a fixed vector $v\in V_{\pi}$ we define a function $f_v: G\rightarrow V_{\pi}$ by $f_v(g)=\pi(g)v.$ We define a functor $F:\mm(G)\rightarrow \mm(\cH(G))$ sending a smooth representation $\pi\in \mm(G)$ with vector space $V$ to the $\cH(G)$-module $F(\pi):=V$ with action given by $F(\cE)v=\langle \cE, f_v \rangle.$ A module $M$ over a non-unital algebra $R$ is called non-degenerate if $Ann(x)\neq R$ for all $x\in M$. The next two positions are from \cite[\S 1]{Bernstein1992}.
	
	\begin{proposition}
		The functor $F$ defines an equivalence of categories
		$$\mm(G)\cong \mm(\cH(G)),$$
		where $\mm(\cH(G))$ denotes the category of non-degenerate $\cH(G)$-modules.
	\end{proposition}
	
	Let $K\leq G$ be a compact open subgroup, $\mm_K(G)$ the full subcategory of modules generated by $K$-fixed vectors, and $e_{K}$ the normalized constant distribution on $K$.
	
	\begin{definition}\label{comp}
		We define $\cH_K(G):=e_K\ast \cH \ast e_K \subseteq \cH(G)$ to be the subalgebra of $K$-biinvariant distributions.
	\end{definition}
	
	We have $\cH(G)=\bigcup_K \cH_K(G).$
	
	\begin{proposition}
		The functor $F$ restricts to an equivalence of categories
		$$\mm_K(G)\cong \mm(\cH_K(G)).$$
	\end{proposition}
	
	Another way to study $\mm(G)$ involves attaching Hecke algebras to the Bernstein blocks $\mm_{\fs}$ coming from Bernstein's decomposition theorem. 
	
	\begin{definition}
		A projective generator in an abelian category $\mm$ is a projective object $\Pi$ such that the functor $F_{\Pi}: \mm\rightarrow \Sets$ defined by $F_{\Pi}(X)=\Hom(\Pi,X)$ is faithful and preserves direct sums.
	\end{definition}	
	
	The following is \cite[Theorem 1.3]{Hyman1968}.
	
	\begin{lemma}\label{progenerator}
		Let $\mm$ be an abelian category with arbitrary direct sums that has a finitely generated projective generator $\Pi$. Let $\Lambda=\ed_{\mm}(\Pi)$ be the algebra of endomorphisms of $\Pi$ in $\mm$. Then 
		$$\mm\cong \mm(\Lambda^{\mathrm{op}})$$
		the category of right $\Lambda$-modules.
	\end{lemma}	
	
	\begin{definition}\label{amor}
		We call two algebras $\cH_1, \cH_2$ Morita equivalent if
		$$\mm(\cH_1)\cong \mm(\cH_2).$$
		We call $\cH_1, \cH_2$ almost Morita equivalent if 
		$$\mm^{\mathrm{f}}(\cH_1)\cong \mm^{\mathrm{f}}(\cH_2).$$
		where $\mm^{\mathrm{f}}(\cH)$ is the category of finite-dimensional modules. We denote Morita equivalence by $\cH_1\sim \cH_2$ and almost Morita equivalence by $\cH_1\amo \cH_2$.
	\end{definition}
	
	\begin{remark}
		A finitely generated projective generator is not unique and different algebras $\cH_1, \cH_2$ produced by Lemma \ref{progenerator} do not need to be isomorphic. Instead, since $\mm(\cH_1)\cong \mm \cong \mm(\cH_2),$ we always have the Morita equivalence
		$$\cH_1\sim \cH_2.$$
	\end{remark}

	Lemma \ref{progenerator} can be applied $\mm_{\fs}(G)$ due to the following proposition.
	
	\begin{proposition}
		$\mm_{\fs}(G)$ has a finitely generated projective generator $\Pi_G^{\fs}$. In particular, if $\cH_{\fs}:=\ed^{\mathrm{op}}_{\mm}(\Pi_G^{\fs}),$ we have
		$$\mm_{\fs}(G)\cong \mm(\cH_{\fs}(G)).$$
	\end{proposition}
	
	\begin{proof}
		The first assertion is \cite[Theorem 23]{Bernstein1992}, see also \cite[\S 1.6]{Roche2002}. The second follows by Lemma \ref{progenerator}.
	\end{proof}
	
	\begin{remark}
		For $\mm_{\fs}(G)$ there are actually many finitely generated projective generators, see \cite[\S 1]{Solleveld2022}. 
	\end{remark}
	
	\begin{remark}\label{hcenter}
		The center of $\cH_{\fs}$ which is canonical, can be identified with regular functions on the corresponding component of the Bernstein variety $\om_{\fs}(G)$ which was discussed in the introduction.
	\end{remark}

	\subsection{Iwahori-Hecke algebras}
	
	Let $G$ be a split reductive group over $F$. The algebras $\cH_K(G)$ and $\cH_{\fs}(G)$ can be very different. The exception is the unramified Bernstein component $\fs_0\in \fB(G)$ containing the representations generated by Iwahori-fixed vectors, ie, if $I$ is an Iwahori subgroup of $G$, we have $\cH_{\fs_0}(G)\cong \cH_I(G).$ We call $H:=\cH_I(G)$ the Iwahori-Hecke algebra of $G$. As it will be the main example of our methods in Section 5, we recall its structure theory. We recommend the exposition \cite{Haines2010}. Let us recall some of the structure theory discussed in \cite[\S 1]{Haines2010}.
	
	Let $T$ be a split maximal torus in $G$ with dual torus $\hat{T}$ over $\mathbb{C}$ and define $R=\C[X_*(T)]\cong \cO(\hat{T})$ to be the group algebra of the cocharacter lattice. Notice that by mapping $\mu\in X_*(T)$ to $\pi^{\mu}=\mu(\pi)\in T(F)$ we get an isomorphism $X_*(T)\cong T(F)/T(\oo)$. Using this isomorphism, we view $R$ as a representation of $T$ and after trivially extending to the Borel $B=TN$, we can consider the parabolic induction $i_B^G(R)$. Then the module of $I$-fixed vectors $M:=i_B^G(R)^I$ is a finitely generated projective generator for $\mm(H)$, so $H\cong \ed_H(M).$  By definition, $M\cong C_c(T(\oo)N\backslash G/ I)$. We define the extended affine Weyl group $\widetilde{W}\cong T(\oo)N\backslash G/I$ and by setting $v_x=1_{T(\oo)NxI}$ we get a $\widetilde{W}$-basis of $M$ as a vector space. Thus, we can define a left $R$-action on $M$ by $\pi^{\mu}\cdot v_x=q^{-\langle \rho,\mu\rangle}v_{\pi^{\mu}\cdot x}$, where $\rho$ is the half-sum of roots of $T$ in $\mathrm{Lie}(N)$, and therefore we get an embedding $R\hookrightarrow H$. Let $W$ be the finite Weyl group.
	
	$H$ is generated over $R$ by the elements $T_w:=1_{IwI}$, $w\in W$. We have $T_w=T_{s_1}\cdots T_{s_n}$ for any reduced expression $w=s_1\cdots s_n.$ To derive a complete presentation of $H$, it is enough to detect how the elements $T_{s}$ for a simple reflection $s$ interact with $R$ and an expression for $T_s^2$. For the latter, a simple integral calculation gives $T_s^2=(q-1)T_s+q$ where $q$ is the cardinality of the residue field. For the first, one way to proceed (see \cite[\S 1.10]{Haines2010}) is by defining \textit{intertwining operators} $I_w, \ \forall w\in W$ between suitable completions of $M$ satisfying the semilinear relation $I_w\pi^{\mu}=\pi^{w(\mu)}I_w.$ Alternatively, we can identify $I_w$ with elements $i_w$ of the algebra $H_L=H\otimes_R L$ where $L$ is the fraction field of $R$. By considering the actions of $I_w, T_w$ on the spherical vector $v_1$, and letting $a\in \Delta$ where $\Delta$ is a choice of simple roots, it is possible to determine explicitly \cite[\S 1.14]{Haines2010} that 	$$i_a=q^{-1}T_{s_a}+\frac{(1-q^{-1})\pi^{a^{\vee}}}{1-\pi^{a^{\vee}}}.$$	
	
	For a simple root $a\in \Delta$ let $s_a$ be the corresponding reflection, $a^{\vee}$ the dual coroot, and $i_a=i_{s_a}$. To ease computations, we define the elements $e_a=1-q^{-1}\pi^{a^{\vee}}$ and $d_a=1-\pi^{a^{\vee}}$, as well as $$c_a=\frac{e_a}{d_a}=\frac{1-q^{-1}\pi^{a^{\vee}}}{1-\pi^{a^{\vee}}}\in L.$$
		
	Then the quadratic relation $T_s^2=(q-1)T_s+q$ becomes
	\begin{equation}\label{iquad}
		i_a^2=c_as_a(c_a)
	\end{equation}
	
	By induction on the length of $w$, we get the following Lemma that will be useful in Section 5.
	
	\begin{lemma}\label{int}
		Let $w\in W$ and $R_w=\{ a\in \Delta \mid a>0, w(a)<0\}.$ Then,
		$$I_wI_{w^{-1}}=\prod_{a\in R_w}\frac{e_ae_{-a}}{d_ad_{-a}}.$$
	\end{lemma}
	
	We can also derive the intertwining relation satisfied by the $T_{s_a}$ from the intertwining relation $i_a r=s_a(r)i_a$. Combining with the quadratic relation, we get the Bernstein presentation for the Iwahori-Hecke algebra \cite[\S 1.15]{Haines2010}.

	\begin{proposition}\label{beriha}
		The Iwahori-Hecke algebra $H$ of $G$ is a free algebra over the group algebra of the cocharacter lattice $R$, with a basis given by elements $T_{w}=T_{s_1}\cdots T_{s_n}$ where $w=s_1\cdots s_n$ is a reduced expression for $w$.
		
		Any reduced expression provides the same $T_w$ and for a simple root $a$ the element $T_{s_a}$ satisfies
		\begin{eqnarray}
			(T_{s_a}+1)(T_{s_a}-q)=0 \\
			T_{s_a}\pi^{\mu}= \pi^{s_a(\mu)} T_{s_a} + \frac{(q-1)(\pi^{\mu}-\pi^{s_a(\mu)} )}{1-\pi^{-a^{\vee}}}.
		\end{eqnarray}
		
	\end{proposition}
	
	We can also use the intertwining elements, to determine the center of $H$. From the intertwining relations, it follows that $W$-invariant elements of $R$ are in the center, ie. $R^W\subseteq Z(H)$. The other direction is also true by virtue of the Satake isomorphism.
	
	\begin{theorem}[Satake isomorphism]
		The center $Z(H)$ of $H$ is $R^W\cong \cO(\hat{T}\git W)$.
	\end{theorem}

	\subsection{Affine Hecke algebras}
	
	The results of the previous subsection admit generalizations in the sense of providing specific presentations for algebras that are (almost) Morita equivalent to Hecke algebras of Bernstein components. Affine Hecke algebras have since found applications in various areas of mathematics, such as knot theory, combinatorics, etc. Due to the fact they admit a Bernstein presentation, Theorem \ref{main} can be applied. We recommend the exposition \cite{Solleveld2021}. 
	
	\begin{proposition}
		Let $(W,S)$ be a Coxeter group, equipped with a function $q: S\rightarrow \C$ such that $q(s)=q(s')$ if $s,s'$ are conjugate in $W$. There is a unique algebra structure $\cH(W,q)$ on the vector space over $\C$ generated by elements $T_w, w\in W$ such that 
		\begin{itemize}
			\item $T_e=1$,
			\item $(T_s-q(s))(T_s+1)=0, s\in S$,
			\item $T_{w_1w_2}=T_{w_1}T_{w_2}$ if $l(w_1w_2)=l(w_1)+l(w_2)$.
		\end{itemize}
		We call $\cH(W,q)$ the Iwahori-Hecke algebra of $(W,q)$. If $W$ is finite, we call $\cH(W,q)$ its finite Hecke algebra. If $(W,S)$ is an affine Weyl group, we say $\cH(W,q)$ is of affine type.
	\end{proposition}
	
	The connection with the definitions of the previous subsection is the following theorem \cite{Iwahori1965}. Let $\cR(G,T)$ denote the root datum of $G$ with respect to $T$. Recall that a root datum $\cR=(X^*, \Phi, X_*, \Phi^{\vee})$ comes equipped with the finite Weyl group $W=W(\Phi)$ of its root system, an affine Weyl group $W_{\mathrm{aff}}$, and the extended affine Weyl group $W(\cR)=X^*\rtimes W(\Phi)$, the first two of which are Coxeter groups.
	
	\begin{proposition}\label{impr}[Iwahori-Matsumoto presentation]
		If $G$ is a split, simply connected, semisimple group over a non-archimedean local field $F$, $T$ a split maximal torus, and $(W_{\mathrm{aff}},S)$ the affine Weyl group of the dual root datum $\cR(G,T)^{\vee}$. Let $q(s)=p, \ \forall s\in S.$ Then,
		$$\cH_{I}(G)\cong \cH(W_{\mathrm{aff}},q).$$
	\end{proposition} 
	
	Affine Hecke algebras are a generalization of Iwahori-Hecke algebras of affine type.
	
	\begin{proposition}
		Let $\cR=(X^*, \Phi, X_*, \Phi^{\vee})$ be an irreducible root datum with finite Weyl group $W=W(\Phi)$, $q\in \R_{\geq 1}$, and $\lambda, \lambda^*: \Phi\rightarrow \C$ be $W$-invariant functions such that
		
		$$a^{\vee}\notin 2X_* \implies \lambda(a)=\lambda^*(a).$$
		
		Let $\C[X^*]$ be the group algebra of the character lattice, with the standard basis $\{\theta_x, x\in X^*\}$ and $\cH(W,q)$ the finite Hecke algebra of $W$.
		
		There is a unique algebra structure on the vector space $\cH(\cR,\lambda,\lambda^*,q):=\C[X^*]\otimes \cH(W,q)$ such that the following are true.
		
		\begin{itemize}
			\item $\C[X^*],\cH(W,q)$ are embedded as subalgebras.
			\item For $a\in \Delta, x\in X$
			$$\theta_xT_{s_a}-T_{s_a}\theta_{s_a(x)}=\left((q^{\lambda(a)}-1)+\theta_{-a}\left(q^{(\lambda(a)+\lambda^*(a))/2}-q^{(\lambda(a)-\lambda^*(a))/2}\right)\right)\frac{\theta_x-\theta_{s_a(x)}}{\theta_0-\theta_{-2a}}.$$
		\end{itemize}
		We call $\cH(\cR,\lambda,\lambda^*,q)$ the affine Hecke algebra of $\cR$.
		
		If $\lambda(a)=\lambda(b)=\lambda^*(a)=\lambda^*(b)$ for all $a,b$, we say $\cH(\cR,\lambda,\lambda^*,q)$ has equal parameters.  
		
	\end{proposition}

	\begin{remark}
		Notice that if $a^{\vee}\notin 2X_*$ or if $\cH(\cR,\lambda,\lambda^*,q)$ has equal parameters, the second relation simplifies to
		$$\theta_xT_{s_a}-T_{s_a}\theta_{s_a(x)}=(q^{\lambda(a)}-1)\frac{\theta_x-\theta_{s_a(x)}}{\theta_0-\theta_{-a}}.$$
	\end{remark}
	
	\begin{remark}
		If $\cR$ is not irreducible, let $d$ be the number of connected components. As noticed in \cite{Aubert2021} , the proposition remains true if we substitute $\bz=(\bz_1,\ldots,\bz_d)$ in the place of $q$ with the obvious changes in the relations. We also call this an affine Hecke algebra $\cH(\cR,\lambda,\lambda^*,\bz).$ 		
	\end{remark}	
	
	Not every affine Hecke algebra is an Iwahori-Hecke algebra, but we do have a generalization of the Iwahori-Matsumoto presentation. Let $\Omega:=\{w\in W(\cR)\mid l(w)=0\}$ be the elements of length zero. Then, $$W(\cR)=W_{\mathrm{aff}}\rtimes \Omega.$$
	
	\begin{proposition}
		There is a unique algebra isomorphism $$\cH(\cR,\lambda,\lambda^*,q)\xrightarrow[]{\cong} \cH(W_{\mathrm{aff}},q)\rtimes \Omega$$ such that 
		\begin{itemize}
			\item It restricts to the identity in $\cH(W,q).$
			\item For all $x\in \Z \Phi$ with $\langle x, a^{\vee}\rangle\geq 0, \ \forall \ a\in \Delta$, it sends $\theta_x$ to $q(x)^{-1/2}T_x.$
		\end{itemize}
	\end{proposition}
	
	For many Bernstein components $\fs\in \fB(G)$, $\cH_{\fs}$ is Morita equivalent to an affine Hecke algebra. In \cite{Solleveld2022}, it was proven that one slight generalization is still needed.	
	
	\begin{definition}
		Consider the following data:
		\begin{enumerate}
			\item A root datum $\cR=(X^*, \Phi, X_*, \Phi^{\vee})$ and a choice of a set of simple roots $\Delta$,
			\item A finite group of the form $W=W(\Phi)\rtimes \fR$ acting on $X^*$,
			\item A $2$-cocycle $\natural: (W/W(\Phi))^2\rightarrow \C.$
			\item $W$-invariant functions $\lambda , \lambda^*: \Phi\rightarrow \C,$ such that $a^{\vee}\notin 2X_* \implies \lambda(a)=\lambda^*(a).$
			\item An array of invertible parameters $\bz=(\mathbf{z}_1,\ldots, \mathbf{z}_d)$.
		\end{enumerate}
		We define the twisted affine Hecke algebra to be $\cH(\cR,\lambda,\lambda^*,\bz)\rtimes \C[\fR,\natural].$
	\end{definition}
	
	For a specialization of the parameters $\bz=(z_1,\ldots,z_d)$, we can specialize a twisted Hecke algebra. 
	
	We also recall a more geometric construction that ties nicely with our perspective. Let $T:=\Hom(X^*, \C^{\times})$ a complex algebraic torus. Since $\cO(T)\cong \C[X^*]$, the group $W$ acts naturally on $T$. 
	
	\begin{proposition}
		There is a unique algebra structure on the vector space $$\cH(T,W,\lambda,\lambda^*, \natural,\bz):=\cO(T)\otimes \C[\bz,\bz^{-1}]\otimes \C[W(\Phi)]\otimes \C[\fR,\natural]$$
		such that 
		\begin{itemize}
			\item Under the isomorphism $\cO(T)\cong \C[X^*],$ the span of
			$\cO(T), \C[\bz,\bz^{-1}]$ and $\C[W(\Phi)]$ is the affine Hecke algebra $\cH(\cR,\lambda,\lambda^*,\bz).$
			\item $\C[\fR,\natural]$ embeds as a subalgebra.
			\item For $\gamma \in \fR$, $w\in W(R)$ and $x\in \cO(T)$:
			$$T_{\gamma}T_{w}\theta_x T_{\gamma}^{-1}=T_{\gamma x\gamma^{-1}}\theta_{\gamma(x)}.$$
		\end{itemize}
		If $\fR=1$, then $\cH(T,W,\lambda,\lambda^*, \natural,\bz)$ is the affine Hecke algebra of $\cR$.
	\end{proposition}
	
	\begin{proof}
		Similar to \cite[Proposition 2.2]{Aubert2021}.
	\end{proof}
	
	Our motivation for introducing twisted affine Hecke algebras is the next theorem which is the main theorem of \cite{Solleveld2022}. 
	
	\begin{theorem}\label{solleveld}
		Let $G$ be a reductive group and $\fs\in \fB(G)$. Then, there exist parameters $\lambda, \lambda^*$ and a cocycle $\natural$ such that $\cH_{\fs}$ is almost Morita equivalent to a specialization $H_{\fs}$ of the twisted affine Hecke algebra $\cH(T_{\fs},W_{\fs},\lambda,\lambda^*, \natural,\bz)$.
	\end{theorem}
	
	The classical Satake isomorphism admits the following generalization.
	
	\begin{lemma}\label{center}
		\sloppy $\cO(T\times \C^d)^{W}$ is a central subalgebra of $\cH(T,W,\lambda,\lambda^*, \natural,\bz)$. It equals $Z(\cH(T,W,\lambda,\lambda^*, \natural,\bz))$ if $W$ acts faithfully on $T$. For a specialization $H$, we have 
		$$Z(H)\cong \cO(T)^{W}.$$
	\end{lemma}
	
	\begin{proof}
		This is \cite[Lemma 2.3]{Aubert2021}.
	\end{proof}
	
	\begin{remark}
		Let $\bv_{\fs}(G)$ be the connected component of the Bernstein variety corresponding to $\fs\in \fB(G)$. It is actually the GIT quotient $T_{\fs}\git W_{\fs}$ of the torus and the Weyl group used to construct the algebras in Solleveld's theorem.
		
		Since almost Morita equivalence preserves the center, by Lemma \ref{center} we retrieve the classical fact that
		$$\oo(\bv_{\fs}(G))\cong Z(H_{\fs})\cong Z(\cH_{\fs}(G)),$$
		which was mentioned in Remark \ref{hcenter}.
	\end{remark}
	
	\section{The Lafforgue variety}
	
	The main goal of this Section is to give a proof of Theorem \ref{main}. Let $k$ be any field and fix an algebraic closure $\bar{k}$. Let $R$ be an associative unital $A$-algebra over a finitely generated commutative central $k$-subalgebra $A\subseteq Z(R)$ such that $R$ is finite as an $A$-module. Lafforgue's original assertion concerns the case of the Hecke algebra $\cH_K$ of a reductive $p$-adic group $G$ and a compact open subgroup $K\leq G$. In particular, Lafforgue's assertion is Theorem \ref{main} for $R=\cH_K$, $A=Z_K$ and $k=\C$. 
	
	If $k=\bar{k}$ was algebraically closed and $M$ is a finite dimensional simple $R$-module, then $A$ must act on $M$ through a character $a:A\to \bar{k}$ by Schur's lemma. The Lafforgue variety can thus be thought of as a scheme over $\Spec(A)$. 
	
	We construct a non-commutative Hilbert scheme and a nested non-commutative Hilbert scheme for the finite $A$-algebra $R$, without any assumptions on $k$, based on Grothendieck's classical results on Quot schemes that are recalled in the Appendix. 
	
	Assume now $\cha(k)=0.$ Then, we construct a trace map from the Hilbert scheme to a generalized Grothendieck vector bundle \cite[Notation 4.6.1.3]{Artin1963}. The Lafforgue variety will be defined to be the image of that map. 
	
	We use the same strategy in positive characteristic, albeit with a twist. Due to the elementary fact that a simple module is not determined by its traces in positive characteristic, we construct a determinant map based on Roby's concept of a polynomial law \cite[Chapitre I, \S 2]{Roby1963}.
	
	\subsection{Non-commutative Hilbert scheme}
	
	Let $R$ be a unital associative $k$-algebra that is a finite module over a finitely generated central $k$-subalgebra $A\subseteq Z:=Z(H)$.
	
	\begin{definition}
		We call the non-commutative Hilbert functor $\Hilb_{R/A}$ the functor that associates to every commutative $A$-algebra $B$ the set of isomorphism classes of $R\otimes_A B$-modules $M$, which are flat as $B$-modules, equipped with a surjective $R\otimes_A B$-linear map $R\otimes_A B\sur M$. 
	\end{definition}
	
	\begin{proposition}\label{hilb}
		The functor $\Hilb_{R/A}$ is representable by a proper scheme over $\Spec(A).$
	\end{proposition}
	
	\begin{proof}
		We consider $R$ just as a finite $A$-module. The Quot functor $\cQ_{R/A}$ associating to every commutative $A$-algebra $B$ the set of isomorphism classes of flat $B$-modules $M$ equipped with a surjective $R\otimes_A B$-linear map $m:R\otimes_A B\twoheadrightarrow M$ is representable by a projective scheme over $\Spec(A)$, by Theorem \ref{quotf} for $E_R$ the coherent sheaf on $S=\Spec(A)$ defined by $R$, and $T=\Spec(B)$. Since the functor $\Hilb_{R/A}$ is a closed subfunctor of $\cQ_{R/A}$, as the extra condition of being an $R\otimes_A B$-module is given by infinitely many algebraic equations, it is also representable by a projective scheme over $\Spec(A)$.
	\end{proof}	
	
	\begin{proposition}\label{hilbd}
		There is a decomposition of $\Hilb_{R/A}$ into open and closed subschemes
		\begin{equation}
			\Hilb_{R/A}=\bigsqcup_{d\in \N} \Hilb^d_{R/A}
		\end{equation}
		where $\Hilb^d_{R/A}$ classifies $R\otimes_A B$-linear maps $m:R\otimes_A B\sur M$ with $M$ being a locally free $B$-module of rank $d.$ 
	\end{proposition}
	
	\begin{proof} 
		The fact that $M$ is flat over $B$ and finitely presented due to the existence of the surjective map, implies that it is locally free over $B$ by \cite[\href{https://stacks.math.columbia.edu/tag/05P2}{Lemma 05P2}]{stacks-project} for $X=\Spec B$. The decomposition follows from the same decomposition for the Quot scheme, see Definition \ref{quotfull}.
	\end{proof}
	
	Since we are mainly interested in simple modules, it will also be useful to consider the following functor.
	
	\begin{definition}
		The nested non-commutative Hilbert functor $n\Hilb_{R/A}$ is the functor which associates to every commutative $A$-algebra $B$ the set of isomorphism classes of pairs of $R\otimes_A B$-modules $M, N$, which are flat as $B$-modules, equipped with surjective $R\otimes_A B$-linear maps $R\otimes_A B\twoheadrightarrow M \twoheadrightarrow N$, where we also require that the latter map has a non-zero kernel.
	\end{definition}
	
	\begin{proposition}
		The functor $n\Hilb_{R/A}$ is representable by a proper scheme over $\Spec(A)$. 
	\end{proposition}
	
	\begin{proof}
		The proof is the same as of Proposition \ref{hilb} due to the fact that $n\Hilb_{R/A}$ is similarly a closed subscheme of a nested Quot scheme as in Theorem \ref{nquot}.
	\end{proof}

	\begin{proposition}\label{nhilb}
		The forgetful map 
		$$F_N: n\Hilb_{R/A} \to \Hilb_{R/A}$$
		defined by $F_N(M,N):=N$ is a proper morphism. In particular, the complement $i\Hilb_{R/A}$ of the image of $F_N$ is open. 
		
		The $B$-points of $i\Hilb_{R/A}$ correspond to simple modules.
	\end{proposition}
	
	\begin{proof}
		By the fact that $n\Hilb_{R/A}$ is proper over $\Spec(A)$ and $\Hilb_{R/A}$ is separated and locally of finite type over $\Spec(A)$, the first part follows from \cite[\href{https://stacks.math.columbia.edu/tag/01W6}{Lemma 01W6}]{stacks-project}. The second part readily follows by the observation that if $M$ is not simple, it must admit some proper quotient $M\twoheadrightarrow N$ and thus be in the image of $n\Hilb_{R/A}$.
	\end{proof}
	A geometric point $x\in i\Hilb_{R/A}(\bar k)$ over a point $a:A\to \bar k$ consists of a quotient $M_x$ of the algebra $R_a=R\otimes_A \bar k$ by a maximal left ideal, or in other words, $M_x$ is a simple $R_a$-module equipped with a generator.

	We consider the group functor $\cG_{R/A}$ which associates to every commutative $A$-algebra $B$ the group $(R\otimes_A B)^\times$ of invertible elements of the possibly non-commutative algebra $R\otimes_A B$.

	\begin{proposition}\label{groupsch}
		If $R$ is locally free over $A$, $\cG_{R/A}$ is a group scheme.
	\end{proposition}
	
	\begin{proof}
		This follows analogously to the easy direction of \cite[Theorem 1]{Nitsure2002}. Let $W(R)(B):=R\otimes_A B$ be the functor denoted by the same letter of \cite[Notation 4.6.1]{Artin1963}. Then in the locally free case $W(R)=V(R^{\vee})$ where notation is as in \emph{loc. cit.}, and the functor $V$ is representable by \cite[Notation 4.6.3.1]{Artin1963}.
		
		Then $\cG_{R/A}$ is identified as a closed subscheme of $W(R)\times W(R)$ by requiring that the product of two elements is $1$. The group structure is obvious.
	\end{proof}
	
	\begin{remark}
		In the case $R$ is locally free over $A$, we also have that the non-commutative Hilbert scheme is a closed subscheme of the relative Grassmannian, see Theorem \ref{quotf}.
	\end{remark}

	The group functor $\cG_{R/A}$ acts on $\Hilb_{R/A}$ relative to $\Spec(A)$. For a $B$-point $(M,m)\in\Hilb_{R/A}(B)$ we will denote the action of $R\otimes_A B$ on $M$ by $(r,m)\mapsto e(r)m$. If $g\in (R\otimes_A B)^\times$ we define the action of $g$ on $(M,m)$ to be $g(M,m)=(M',m')$ where $M'=M$ as a $B$-module equipped with the structure of $R\otimes_A B$-module given by $e'(r)m= e(g^{-1}rg)m$, and $m'=e(g)m$. Similarly, the group functor $\cG_{R/A}$ also acts on the nested Hilbert scheme $n\Hilb_{R/A}.$ 
	
	\begin{proposition}
		The morphism $F_N: n\Hilb_{R/A} \to \Hilb_{R/A}$ is $\cG_{R/A}$-equivariant, and the complement of its image $i\Hilb_{R/A}$ is open and stable under the action of $\cG_{R/A}$.
	\end{proposition}
	\begin{proof}
		The first part follows from the definition of the action. Therefore, the image is a $\cG_{R/A}$-equivariant closed subscheme of $\Hilb_{R/A}$. The second part follows from this observation and Proposition \ref{nhilb}.
	\end{proof}
	
	\subsection{Trace map}
	
	We define a generalized vector bundle $V_{R/A}$ over a commutative ring attached to an $A$-module such as $R$. As a functor, $V_{R/A}$ attaches to each $A$-algebra $B$ the abelian group $\Hom_A(R,B)$ of all $A$-linear maps $R\to B$. This functor is represented by the symmetric algebra $\Sym_A(R)$: it is the $\N$-graded $A$-algebra with $\Sym^0_A(R)=A$, $\Sym^1_A(R)=R$, and for every $d\in \N$, the $d$-th symmetric power $\Sym^d_A(R)$ is the largest quotient of the $d$th fold tensor power $R^{\otimes d}$ of $R$ over $A$ on which the symmetric group $S_d$ acts trivially. We claim that the morphism of functors on $A$-algebras:
	$$\Hom_{A-Alg}(\Sym_A(R),B) \to \Hom_A(R,B),$$
	defined as the restriction of an $A$-algebra homomorphism $x:\Sym_A(R)\to B$ to the degree 1 component $\Sym^1_A(R)=R$, is an isomorphism of functors. Indeed, every $A$-linear map $y:R\to B$, induces an $A$-linear map $R^{\otimes d} \to B^{\otimes d} \to B$ which factors through an $A$-linear map $y^d :\Sym^d_A(R)\to B$. It's not hard to check that the $A$-linear map $x: \bigoplus_{d\in\N} \Sym^d_A(R) \to B$ given by $x=\bigoplus_{d\in\N} y^d$ is a homomorphism of $A$-algebras. It is also clear that the map $y\mapsto x$ thus defined gives rise to an inverse of the functor $x\mapsto y$. We conclude that the functor $V_{R/A}$ is representable by the affine scheme $\Spec(\Sym_A(R))$ which is a generalized vector bundle in the sense of Grothendieck.
	
	We can construct the trace map 
	\begin{equation}
		\tr_{R/A}: \Hilb_{R/A} \to V_{R/A}
	\end{equation}
	as follows. By Proposition \ref{hilbd}, every point of $\Hilb_{R/A}(B)$ can be written as $(M,m)$ where $M$ is an $R\otimes_A B$-module that is locally free and finite as a $B$-module and $m$ is the image of $1$. Every $r\in R$ defines a  $B$-linear operator of $M$ given by the structure of an $R\otimes_A B$-module. Since $M$ is a finitely generated locally free $B$-module, the trace $\tr_B(r)\in B$ is well defined. This gives rise to an $A$-linear map $\tr_M:R\to B$ and thus to a $B$-point of $V_{R/A}$. 
	
	We now define $\Laf_{R/A}$ to be the scheme-theoretic image of $\tr_{R/A}$. Let $\iLaf_{R/A}$ denote the complement of the image of the composition $\tr_{R/A} \circ F_N: n\Hilb_{R/A} \to \Laf_{R/A}$, where said image is closed because $n\Hilb_{R/A}$ is proper. 
	
	For a point $a:A\rightarrow k(a)$, we denote by $R_a$ the finite dimensional $k(a)$-algebra $R_{a}=R\otimes_{A,a}k(a).$
	
	\begin{proposition}\label{preimage}
		Assume that $A$ is a $k$-algebra where $k$ is a field of characteristic zero. Then the preimage $\tr_{R/A}^{-1}(\iLaf_{R/A})$ is $i\Hilb_{R/A}$. Moreover for every geometric point $l\in \iLaf_{R/A}(\bar k)$ over $a:A\to \bar k$, the group $\cG_{R/A}(\bar k)$ acts transitively on the fiber $\tr_{R/A}^{-1}(l)$. 
	\end{proposition}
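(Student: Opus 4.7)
The plan rests on two classical inputs available in characteristic zero over the algebraic closure $\bar k$: Burnside's theorem (Jacobson density), which produces, for any simple $R_a$-module $M_0$ of $\bar k$-dimension $d$, a surjection $R_a \twoheadrightarrow \End_{\bar k}(M_0) \cong M_d(\bar k)$; and linear independence of characters of pairwise non-isomorphic simple modules of a finite-dimensional algebra. Combined with the additivity of trace on short exact sequences, the latter implies that in characteristic zero the character $\tr_M \colon R_a \to \bar k$ of a finite-length $R_a$-module $M$ determines the Jordan--H\"older composition of $M$, multiplicities included. (In positive characteristic the multiplicities would only be recovered modulo $p$; this is the sole place the characteristic zero hypothesis enters essentially.)

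For the equality $\tr_L^{-1}(\iLaf_{R/A}) = i\Hilb_{R/A}$, both sides are open subschemes of $\Hilb_{R/A}$, so it suffices to compare underlying sets of geometric points. The inclusion $\tr_L^{-1}(\iLaf_{R/A}) \subseteq i\Hilb_{R/A}$ is immediate: if $(M,m)$ lies in the image of $n\Hilb_{R/A} \to \Hilb_{R/A}$, then $\tr_L(M,m)$ lies in the image of $n\Hilb_{R/A} \to \Laf_{R/A}$, hence outside $\iLaf_{R/A}$. For the reverse inclusion I would argue by contradiction: suppose $(M,m) \in i\Hilb_{R/A}(\bar k)$ (so $M$ is simple) yet $\tr_M = \tr_{M'}$ for some $(M', N') \in n\Hilb_{R/A}(\bar k)$. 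By the character input above, the semisimplification of $M'$ has $M$ as its unique Jordan--H\"older factor, and matching $\tr_M = \tr_{M'}$ forces multiplicity one, so $M' \cong M$ is itself simple -- contradicting the existence of a proper quotient $N'$ of strictly smaller rank.

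For the transitivity, let $l = \tr_{M_0} \in \iLaf_{R/A}(\bar k)$ with $M_0$ a simple $R_a$-module of dimension $d$. Every point of $\tr_L^{-1}(l)$ corresponds to a left ideal $I \subseteq R_a$ with $R_a/I \cong M_0$. Such an $I$ contains the annihilator $J := \ann_{R_a}(M_0)$, so under the Burnside identification $R_a/J \cong M_d(\bar k)$ the fiber is parametrized by those codimension-$d$ left ideals whose quotient is the standard column module -- explicitly $I_{[v]} := \{A \in M_d(\bar k) : Av = 0\}$ indexed by $[v] \in \mathbb{P}^{d-1}(\bar k)$. To see that $\cG_{R_a} = R_a^\times$ acts transitively on this fiber, I would factor the natural map $R_a^\times \to \GL_d(\bar k)$ as $R_a^\times \to (R_a/\rad(R_a))^\times \to (R_a/J)^\times = \GL_d(\bar k)$, using $\rad(R_a) \subseteq J$. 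The first arrow is surjective since lifts of units from $R_a/\rad(R_a)$ to $R_a$ are units; the second is surjective by Wedderburn, as $R_a/\rad(R_a)$ is a product of matrix algebras over $\bar k$ with the factor corresponding to $M_0$ projecting isomorphically onto $R_a/J$. Since $\GL_d(\bar k)$ acts transitively on $\mathbb{P}^{d-1}(\bar k)$, transitivity on the fiber follows.

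The main technical obstacle I expect is verifying that the $\cG_{R/A}$-action defined in the previous subsection -- which twists the $R$-action by the inner automorphism $r \mapsto g^{-1}rg$ and translates the cyclic generator by $g$ -- does indeed restrict on the fiber to the standard $\GL_d(\bar k)$-action on $\mathbb{P}^{d-1}(\bar k)$ via the surjection just described. This is a careful but routine bookkeeping exercise, unwinding how the change of cyclic generator and the inner twist together act on maximal left ideals once they have been identified with lines in $\bar k^d$; once it is in place, transitivity is immediate from transitivity of $\GL_d(\bar k)$ on $\mathbb{P}^{d-1}(\bar k)$.
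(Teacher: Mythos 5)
Your proposal is correct and follows essentially the same route as the paper: both parts rest on the Brauer--Nesbitt/Bourbaki fact (proved via Jacobson-density projectors and linear independence of characters) that in characteristic zero the trace form of a finite-dimensional module determines its semisimplification, which simultaneously gives $\tr_L^{-1}(\iLaf_{R/A})=i\Hilb_{R/A}$ and reduces transitivity to the statement that any two cyclic presentations of the same simple module differ by a unit of $R_a$. The one place you go beyond the paper is the transitivity step, which the paper disposes of in a single asserted sentence; your explicit factorization $R_a^\times \twoheadrightarrow (R_a/\rad(R_a))^\times \twoheadrightarrow \GL_d(\bar k)$ (lifting units through the nilpotent radical, then projecting onto the Wedderburn factor of $M_0$) together with the identification of the fiber with $\mathbb{P}^{d-1}(\bar k)$ supplies exactly the justification the paper omits, and the remaining bookkeeping you flag --- matching the twisted-action-plus-translated-generator description of the $\cG_{R/A}$-action with the standard $\GL_d(\bar k)$-action on lines --- is indeed routine.
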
 
	
	This assertion is nothing but a reformulation of well known facts about modules over a finite-dimensional algebra, improperly referred to as Brauer-Nesbitt's theorem \cite[chapter XVII Cor 3.8]{Lang2002}. As we want to extend the construction of the Lafforgue variety to the case of positive characteristic, we will give a sketch of the proof of Proposition \ref{fibers} which was given in full details in Lang's book to which we refer for more information. 
	
	\begin{proposition}[Bourbaki] \label{fibers} Assume that  $k$ is a field of characteristic zero and $R$ is a finite-dimensional $k$-algebra possibly non-commutative. Let $M$ and $N$ be $k$-finite dimensional $R$-modules such that for all $x\in R$, we have $\tr_x(M)=\tr_x(N)$, then $M$ and $N$ have the same semi-simplification. In particular if $\tr_R(M_1)=\tr_R(M_2)$, and if $M_1$ is a simple $R$-module then $M_2$ is also simple and $M_2\cong M_1$. 
	\end{proposition}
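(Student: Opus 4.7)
The plan is to prove the first assertion by reducing to the semisimple case and then invoking linear independence of characters of simple modules, with the characteristic zero hypothesis entering at the very end to divide by dimensions. The second assertion will then follow almost formally.

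First, I would observe that trace is additive on short exact sequences of $R$-modules: if $0 \to M' \to M \to M'' \to 0$ is exact, then $\tr_x(M) = \tr_x(M') + \tr_x(M'')$ for every $x \in R$. By iterating along a composition series, we get that $\tr_x(M) = \tr_x(M^{\mathrm{ss}})$, where $M^{\mathrm{ss}} = \bigoplus_i m_i S_i$ is the semi-simplification, summed over isomorphism classes of simple $R$-modules $S_i$ with multiplicities $m_i \ge 0$. Thus, replacing $M, N$ by their semi-simplifications, it suffices to prove that if $M = \bigoplus_i m_i S_i$ and $N = \bigoplus_i n_i S_i$ satisfy $\tr_x(M) = \tr_x(N)$ for all $x \in R$, then $m_i = n_i$ for all $i$.

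Next I would pass to the semisimple quotient $\bar{R} = R/J(R)$, through which every semisimple module factors. By the Artin--Wedderburn theorem, $\bar{R} \cong \prod_{i=1}^t M_{n_i}(D_i)$, where the $D_i$ are finite-dimensional division algebras over $k$. Let $e_i \in \bar{R}$ be the central idempotent projecting onto the $i$-th factor, and lift it to an element $\tilde{e}_i \in R$ (or work directly in $\bar{R}$, since trace factors through $\bar{R}$ on semisimple modules). Then $e_i$ acts as the identity on $S_i$ and as zero on $S_j$ for $j \ne i$, so
\begin{equation*}
\tr_{e_i}(M) = m_i \cdot \dim_k(S_i), \qquad \tr_{e_i}(N) = n_i \cdot \dim_k(S_i).
\end{equation*}
Since $\mathrm{char}(k) = 0$ and $\dim_k(S_i) \ge 1$ is a nonzero integer, hence invertible in $k$, the equality $\tr_{e_i}(M) = \tr_{e_i}(N)$ forces $m_i = n_i$, completing the proof of the first part.

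For the second statement, a point $x_j = (M_j, m_j) \in \Hilb_{R/A}(\bar{k})$ above $a: A \to \bar k$ gives a cyclic $R_a$-module $M_j$, and $\tr_L(x_j)$ records the function $r \mapsto \tr_r(M_j)$. If $\tr_L(x_1) = \tr_L(x_2)$, the first part says $M_1$ and $M_2$ share a semi-simplification. If $M_1$ is simple then $M_1^{\mathrm{ss}} = M_1$ has length one, so $M_2^{\mathrm{ss}}$ also has length one, forcing $M_2$ to be simple and isomorphic to $M_1$. The main (and essentially only) obstacle is the characteristic zero input at the idempotent-trace step, without which the dimensions $\dim_k S_i$ could vanish in $k$ and multiplicities could no longer be recovered; this is precisely the reason the positive characteristic case is later handled separately via the determinant rather than the trace.
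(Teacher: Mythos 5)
Your proof is correct and follows essentially the same route as the paper: reduce to semi-simplifications, produce an element of $R$ acting as the identity on one simple module and as zero on the others, compare traces, and use characteristic zero to divide by $\dim_k(S_i)$. The only cosmetic difference is that you obtain the projectors from the Artin--Wedderburn central idempotents of $R/J(R)$ (pulled back along the surjection $R \twoheadrightarrow R/J(R)$), whereas the paper invokes the Jacobson density theorem directly; both yield the same elements and the rest of the argument is identical.
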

	
	\begin{proof}
		The assertion is obvious in one direction. If the $R$-modules $M_1$ and $M_2$ have the same semi-simplification as $R$-modules then the induced linear forms $\tr_{M_1},\tr_{M_2}: R\to k$ are equal because traces only depend on the semi-simplification. Conversely, Jacobson's density theorem \cite[Chapter XVII, Theorem 3.2]{Lang2002} (the original paper is \cite{Jacobson1945}) implies the existence of projectors: if $V_0,V_1,\ldots,V_n$ are non-isomorphic simple $R$-modules, there exists an element $e_0\in R$ which acts as the identity on $V_0$ and $0$ on $V_1,\ldots,V_n$. Now let $V_0,\ldots,V_n$ be the simple $R$-modules occurring as a simple subquotient of $M_1$ or $M_2$ and write decompositions of the semi-simplifications of $M_1$ and $M_2$ as $M_1^{ss}= V_0^{m_1} \oplus U_1$ and $M_2^{ss}= V_0^{m_2} \oplus U_2$ where $U_1$ and $U_2$ are semi-simple modules with no occurrences of $V_0$. We have the equalities 
		$$m_1\dim(V_0)=\tr_{M_1}(e_0)= \tr_{M_2}(e_0)=m_2\dim(V_0)$$
		as elements of $k$, and since $\cha(k)=0$ we have $m_1=m_2$. The same argument applies in any $V_i$, therefore $M_1$ and $M_2$ have the same semi-simplification.
	\end{proof}
	
	\begin{remark}
		Jacobson's density theorem is valid in any characteristic. The characteristic zero assumption is only used to guarantee that in $k$ we have $\dim(V_0)\neq 0$. 
	\end{remark}
	
	We turn to the proof of Proposition \ref{preimage}.
	
	\begin{proof}
		The last statement of Proposition \ref{fibers} implies that $\tr_L^{-1}(\iLaf_{R/A})=i\Hilb_{R/A}$. It also implies that if $l\in \iLaf_{R/A}(\bar k)$ over $a:A\to \bar k$, and if $x_1,x_2\in \tr_L^{-1}(l)$ are represented by $(M_1,m_1)$ and $(M_2,m_2$ of $R_a$ then $M_1$ and $M_2$ are isomorphic simple $R_a$-modules. It follows that there exists $g\in R_a^\times$ such that $gx_1=x_2$. In other words, the fiber of $\cG_{R/A}$ over $a$ acts transitively on the fiber of $\tr$ over $l$. 	
	\end{proof}
	
	Finally, we recall and prove Theorem \ref{main}, in the case $\cha(k)=0$.
	
	\main*
	
	\begin{proof}
		By Proposition \ref{fibers}, the trace map $\tr: \Hilb_{R/A}\rightarrow \Laf_{R/A}$ forgets the choice of a generator for the module $M$ and parametrizes modules up to isomorphism. Since $\iLaf_{R/A}$ is the complement of the image $\tr \circ F_N$, modules in $\iLaf_{R/A}$ are the ones not admitting a proper quotient $M\twoheadrightarrow N$, therefore they are simple.
		
		Since $\Hilb_{R/A}$ is proper over $\Spec A$ and $V_{R/A}$ is separated and locally of finite type, $p$ is proper by \cite[\href{https://stacks.math.columbia.edu/tag/0AH6}{Tag 0AH6}]{stacks-project}. Since $\Laf_{R/A}$ is a closed subscheme of the affine scheme $V_{R/A}$, $p$ is affine. Therefore, $p$ is finite.
	\end{proof}

	\subsection{Determinant map}
	
	Without any hypothesis on the characteristic, we have to replace the trace by the determinant. Let us formalize the construction of the determinant map as an analogue of the trace map previously defined. 
	
	Let $A$ be a commutative ring. An $A$-module $R$ gives rise to a functor $\underline R: B\mapsto R\otimes_A B$ from the category of $A$-algebras to the category of abelian groups. We recall the definition of a polynomial law in \cite{Roby1963}.
	
	\begin{definition}
		A polynomial law on $R$ is a morphism of functors $f:\underline R \to \underline A.$
		
		We denote by $\mathrm{Pol}_A(R)$ the set of all polynomial laws on the $A$-module $R$.
	\end{definition} 
	Thus, a polynomial law $f$ on $R$ consists of a family of homomorphisms of abelian groups $f_B:R\otimes_A B\to B$ depending on $B$ in a functorial way. 
	
	If $r_1,\ldots,r_n\in R\otimes_A B$ form a finite sequence $\underline r$ of elements of $R\otimes_A B$, then $f$ gives rise to a polynomial $f_{\underline r}\in B[X_1,\ldots,X_n]$, where $X_1,\ldots,X_n$ are free variables, such that for every $x_1,\ldots,x_n \in B$, we have $f_{\underline r}(x_1,\ldots,x_n)=f(x_1 r_1+\cdots+x_n r_n)$. Indeed, if we take $$X_{\underline r}=r_1\otimes X_1+\cdots + r_n \otimes X_n \in R\otimes_A B[X_1,\ldots,X_n],$$ then we set 
	$$f_{\underline r}:=f_B(X_{\underline r})\in B[X_1,\ldots,X_n],$$ 
	see \cite[Thm 1.1]{Roby1963}. The main point in Roby's concept of polynomial law is that the polynomial $f_{\underline r}$ is a part of the data of $f$.
	
	\begin{definition}
		We say that the polynomial law $f:\underline R\to \underline A$ is homogeneous of degree $d\in\N$ if for every $A$-algebra $B$, $x\in B$ and $r\in R\otimes_A B$ we have $f_B(xr)=x^d f_B(r)$. 
		
		We denote by $\mathrm{Pol}^d_A(R)$ the set of all homogeneous polynomial laws of degree $d$ on the $A$-module $R$.
	\end{definition}	
	It is not hard to check that the polynomial law $f$ is homogeneous of degree $d$ if and only if for every finite sequence $\underline r=(r_1,\ldots,r_n)$ of elements of $R\otimes_A B$ for any $A$-algebra $B$, $f_{\underline r}$ is a homogeneous polynomial of degree $d$ with coefficients in $B$, \cite[Prop. I.1, p. 226]{Roby1963}.  
	
	\begin{example}
		A homogeneous polynomial law of degree $1$ on $R$ consists of a family of linear forms $f_B:M\otimes_A B\to B$ depending functorially on $B$.  
	\end{example}
	
	Now, we will generalize Grothendieck's construction of a generalized vector bundle associated to an $A$-module, by replacing linear forms on $M$ by homogeneous polynomial laws of degree $d$. Let $A$ be a commutative ring and $R$ a $A$-module. 
	
	\begin{definition}
		We define the functor $S^d V_{R/A}$ which attaches to every $A$-algebra $B$ the set $\mathrm{Pol}^d_B(R\otimes_A B)$ of polynomial laws on the $B$-module $R\otimes_A B$ which are homogeneous of degree $d$.
	\end{definition} 
	
	\begin{example}
		For $d=1$, $\mathrm{Pol}^1_B(R\otimes_A B)=\Hom_B(R,B)$ and we have an isomorphism of functors $S^1 V_{R/A}=V_{R/A}$ which is represented by the affine scheme $\Spec (\Sym_A(R))$.
	\end{example}
	
	The above example generalizes. Let $\Gamma^d_A R$ denote the $d$-th divided power of the $A$-module $M$ \cite[Ch. III, p. 249]{Roby1963}. The following proposition is \cite[Thm III.3 p. 262, IV.1 p. 266]{Roby1963}.

	\begin{proposition}
		For every $d\in\N$, there is a canonical isomorphism of functors $\mathrm{Pol}^d_B(R\otimes_A B)=\Hom_B(\Gamma^d_A R,B).$
	\end{proposition}

	As a consequence, we obtain the following.
	\begin{proposition}
		The functor $B\mapsto S^d V_{R/A}(B)=\mathrm{Pol}^d_B(R\otimes_A B)$ is representable by the affine scheme $\Spec(\Sym_A (\Gamma^d_A R))$. 
	\end{proposition}	
	
	Let $R$ be a possibly non-commutative algebra containing a commutative ring $A$ in its center such that $R$ is finite locally free as an $A$-module. 
	
	Every point $x\in \Hilb^d_{R/A}(B)$ is represented by an $(R\otimes_A B)$-quotient module $M$ of $R\otimes_A B$ which, as a $B$-module, is locally free of rank $d$. Choosing local generators of $M$ as a locally free $B$-module we see that $r\mapsto \det_M(r)$ gives rise to a morphism $R\otimes_A B \to B$ which is homogeneous of degree $d$ and therefore a point $\det(x)\in S^d V_{R/A}(B)$.
	
	\begin{definition}
		We call the morphism
		\begin{equation}
			\mathrm{det}_{R/A}: \Hilb^d_{R/A} \to S^d V_{R/A}
		\end{equation}
		defined by the morphism of functors $x\mapsto \det(x)$ the determinant map.
	\end{definition}
	
	As in the proof of Theorem \ref{main} in the previous subsection, since $\Hilb^d_{R/A}$ is a proper scheme over $A$, and $S^d V_{R/A}$ is affine, the morphism $\mathrm{det}_{R/A}$ factors through a closed subscheme $\Laf^d_{R/A}$ of $S^d V_{R/A}$ which is finite over $A$. We thus get a proper surjective map
	$$\mathrm{det}_L^d: \Hilb^d_{R/A} \to \Laf^d_{R/A}.$$	
	
	Using the nested Hilbert scheme $n\Hilb^d_{R/A}$ as before, we can define open subschemes $i\Hilb^d_{R/A}$ and $i\Laf^d_{R/A}$ as the complements of $n\Hilb^d_{R/A}$ and its image. Geometric points $x\in i\Hilb^d_{R/A}(\bar k)$ over $a:A\to \bar k$ correspond to $R_a$-modules that are simple. 
	
	\begin{proposition}
		We have $\det_L^{-1}(i\Laf^d_{R/A})=i\Hilb^d_{R/A}$. For every geometric point $l\in i\Hilb^d_{R/A}(\bar k)$ over $a:A\to \bar k$, the group functor $\cG_{R_a}$ acts transitively on the fiber $(\det_L^{d})^{-1}(l)$. 
	\end{proposition}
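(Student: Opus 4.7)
The strategy is to mirror the characteristic-zero argument of Proposition \ref{fibers}, replacing the trace-based Brauer--Nesbitt step with a determinant-based analogue that is valid in arbitrary characteristic. The key observation is that the homogeneous degree-$d$ polynomial law $\det_M: R_a \to \bar k$ attached to a $d$-dimensional $R_a$-module $M$ encodes, for every $r \in R_a$, the full characteristic polynomial of $r$ acting on $M$. Concretely, evaluating the polynomial law on $T \cdot 1_{R_a} - r \in R_a \otimes_{\bar k} \bar k[T]$ in the sense of Roby yields
$$\chi_r^M(T) := \det_M(T\cdot 1_{R_a} - r) \in \bar k[T],$$
which is precisely the characteristic polynomial of the endomorphism of $M$ induced by $r$. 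Hence equality of determinant polynomial laws $\det_{M_1} = \det_{M_2}$ is equivalent to equality of characteristic polynomials $\chi_r^{M_1} = \chi_r^{M_2}$ for every $r \in R_a$.

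The main technical input is then the following characteristic-free Brauer--Nesbitt statement: if $M_1, M_2$ are finite-dimensional $R_a$-modules with $\det_{M_1} = \det_{M_2}$, then $M_1^{ss} \cong M_2^{ss}$. I would adapt the proof of Proposition \ref{fibers} as follows. Let $V_0, V_1, \dots, V_n$ be the simple $R_a$-modules appearing as subquotients of $M_1$ or $M_2$, and write
$$M_i^{ss} = V_0^{m_i} \oplus U_i, \qquad i = 1,2,$$
with $V_0$ not appearing in $U_i$. By Jacobson density, there exists an idempotent $e_0 \in R_a$ acting as the identity on $V_0$ and as $0$ on each $V_j$ with $j \geq 1$. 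Comparing characteristic polynomials gives
$$T^{\dim U_1}(T - 1)^{m_1 \dim V_0} = \chi_{e_0}^{M_1}(T) = \chi_{e_0}^{M_2}(T) = T^{\dim U_2}(T - 1)^{m_2 \dim V_0},$$
and equating the integer exponents forces $m_1 \dim V_0 = m_2 \dim V_0$, hence $m_1 = m_2$. Crucially, this argument compares integer exponents rather than trace values in $\bar k$, so no hypothesis on the characteristic is needed.

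Granting this lemma, both assertions follow. For $\det_L^{-1}(i\Laf^d_{R/A}) = i\Hilb^d_{R/A}$: the inclusion $\supseteq$ is the nontrivial one, so assume $(M,m) \in i\Hilb^d$ (so $M$ is simple of dimension $d$) and suppose for contradiction that some $(M',N') \in n\Hilb^d$ has $\det_L(M',m') = \det_L(M,m)$; the Brauer--Nesbitt step gives $(M')^{ss} \cong M^{ss} = M$, which forces $M' \cong M$ simple, contradicting the existence of the nontrivial quotient $M' \twoheadrightarrow N'$. The reverse inclusion $\subseteq$ is immediate from the definition of $i\Laf^d$ as the complement of the image of $n\Hilb^d \to \Laf^d$. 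For the transitivity, given $x_1, x_2 \in (\det_L^d)^{-1}(l)$ represented by simple modules $(M_i, m_i)$, Brauer--Nesbitt furnishes an $R_a$-linear isomorphism $M_1 \cong M_2$ under which $m_1, m_2$ become two cyclic generators of a common simple module $M$; since $\rad(R_a)$ annihilates $M$, the quotient $R_a / \rad(R_a)$ is semisimple and its unit group acts transitively on nonzero vectors of $M$ by Jacobson density, and this unit is lifted to $R_a^\times = \cG_{R_a}(\bar k)$ using that $1 + \rad(R_a) \subset R_a^\times$. I expect the Brauer--Nesbitt step to be the main obstacle: the subtlety lies in avoiding the division by $\dim V_0 \in \bar k$ that breaks the characteristic-zero proof, which is resolved precisely by extracting multiplicities from the integer exponents of the characteristic polynomial of an isotypic projector rather than from a trace.
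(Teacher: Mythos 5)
Your proposal is correct and follows essentially the same route as the paper: both reduce the statement to a characteristic-free Brauer--Nesbitt lemma for the determinant polynomial law, proved by evaluating on the idempotent projectors supplied by Jacobson density and comparing \emph{integer exponents} rather than trace values. The only cosmetic difference is that you evaluate at $T\cdot 1 - e_0$ to read off multiplicities from the characteristic polynomial of a single projector, while the paper evaluates at $X_1e_1+\cdots+X_re_r$ and reads them off the resulting monomial $\prod_i X_i^{m_i\dim V_i}$; the two computations are interchangeable.
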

	
	Again, this assertion is nothing but a reformulation of well known facts about modules over a finite-dimensional algebra, which we prove for completeness.
	
	\begin{proposition}
		Let $R$ be a possibly non-commutative finite-dimensional algebra over a field $k$ and $M,N$ be $R$-modules which are $d$-dimensional $k$-vector spaces. Assume that $\det_{M}=\det_{N}$ as homogeneous polynomial of degree $d$ on $R$, then $M$ and $N$ have isomorphic semi-simplifications. In particular, if $M$ is a simple $R$-module then $N$ is also simple and $N\cong M$. 
	\end{proposition}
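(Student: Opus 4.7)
The plan is to mimic the characteristic-zero argument for Proposition~\ref{fibers}, but to replace the linear invariant $\tr_M$ by the degree-$d$ polynomial law $\det_M$. The decisive advantage of the determinant is that, once we base-change the polynomial law along $k \to k[T]$ and evaluate on an element of the form $T - e \in R \otimes_k k[T]$, we recover the full characteristic polynomial of $e$ acting on $M$. Extracting multiplicities from this characteristic polynomial means comparing integer exponents of linear factors, so we never have to divide by $\dim V$ in $k$; this is exactly the step that forced the characteristic-zero hypothesis in Proposition~\ref{fibers} and the reason the trace is insufficient in positive characteristic.

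First I would reduce to the semisimple case. For any short exact sequence $0 \to M' \to M \to M'' \to 0$ of finite-dimensional $R$-modules, functoriality of the determinant together with the block upper-triangular shape of the action after base change to any $k$-algebra $B$ gives $\det_M = \det_{M'} \cdot \det_{M''}$ as polynomial laws. Iterating along a composition series yields $\det_M = \det_{M^{ss}}$, and likewise for $N$. The content of the proposition is therefore $M^{ss} \cong N^{ss}$, and it suffices to match the multiplicities of each simple constituent.

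Next I would apply Jacobson's density theorem, which is valid in any characteristic. Let $V_0, \ldots, V_n$ be a list of pairwise non-isomorphic simple $R$-modules covering every constituent of $M^{ss}$ and $N^{ss}$, and write $M^{ss} = \bigoplus_i V_i^{m_i}$ and $N^{ss} = \bigoplus_i V_i^{m_i'}$. For each index $i$, Jacobson density produces an element $e_i \in R$ that acts as the identity on $V_i$ and as zero on every $V_j$ with $j \neq i$. The element $e_i$ therefore acts on $M^{ss}$ as a projector with eigenvalue $1$ of multiplicity $m_i \dim_k V_i$ and eigenvalue $0$ of complementary multiplicity, and similarly on $N^{ss}$.

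Finally I would exploit the polynomial law structure over $B = k[T]$. Evaluating $\det_M$ at the element $T - e_i \in R \otimes_k k[T]$ returns the characteristic polynomial of $e_i$ acting on $M^{ss}$, namely
$$T^{d - m_i \dim_k V_i}(T-1)^{m_i \dim_k V_i} \in k[T].$$
The hypothesis $\det_M = \det_N$ of polynomial laws yields the same identity with $m_i'$ in place of $m_i$. Comparing the exponent of $(T-1)$ gives $m_i \dim_k V_i = m_i' \dim_k V_i$ as non-negative integers, and since $\dim_k V_i \geq 1$ we conclude $m_i = m_i'$. Running over all $i$ proves $M^{ss} \cong N^{ss}$, and the ``in particular'' clause is immediate because a module is simple if and only if its semisimplification is simple. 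The main obstacle of the trace-based proof, the possible vanishing of $\dim_k V_i$ in $k$, is sidestepped because exponents of polynomials are compared in $\ZZ$ rather than in $k$.
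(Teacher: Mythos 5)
Your proposal is correct and follows essentially the same route as the paper: both reduce to comparing semisimplifications, invoke Jacobson density to produce the projectors $e_i$, and then compare integer exponents in a polynomial obtained by evaluating the determinant law over a polynomial ring, which is exactly how the characteristic-zero division by $\dim V_i$ is avoided. The only cosmetic difference is that you evaluate at $T - e_i$ one projector at a time (obtaining characteristic polynomials), whereas the paper evaluates once at $X_1e_1+\cdots+X_re_r$ to read off all multiplicities from a single monomial.
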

	
	\begin{proof}
		The assertion is obvious in one direction. If the $R$-modules $M$ and $N$ have the same semi-simplification then the induced homogenous forms $\det_{M},\det_{N}: R\to k$ are equal because the determinant only depends on the semi-simplification. Conversely, Jacobson's density theorem \cite[Chapter XVII, Theorem 3.2]{Lang2002} implies the existence of projectors: if $V_1,\ldots,V_r$ are non-isomorphic simple $R$-modules, then there exists an element $e_i\in R$ which acts as identity on $V_i$ and $0$ on $V_j$ for $j\neq i$. Now let $V_1,\ldots,V_r$ be the simple $R$-modules occurring as a simple subfactors of $M$ or $N$ and  decompose the semi-simplifications of $M$ and $N$ as 
		\begin{equation}
			M^{ss}= V_1^{m_1} \oplus\cdots\oplus V_r^{m_r} \mbox{ and }
			N^{ss}= V_1^{n_1} \oplus \cdots\oplus V_r^{n_r}
		\end{equation} 
		If $X_1,\ldots,X_r$ are free variables then we have the formula 
		$$\mathrm{det}_M\left(X_1e_1+\cdots+X_r e_r\right)= X_1^{m_1 \dim(V_1)} \ldots X_r^{m_r \dim(V_r)}$$ 
		for the determinant of $x_1e_1+\cdots+x_r e_r$ on $M$ and similarly for $N$.  The equality 
		$$\mathrm{det}_M\left(X_1e_1+\cdots+X_r e_r\right)=
		\mathrm{det}_N\left(X_1e_1+\cdots+X_r e_r\right)$$ 
		of polynomials of variables $X_1,\ldots,X_r$ implies that $m_i=n_i$ for all $i$. It follows that $M$ and $N$ have isomorphic semi-simplifications.
	\end{proof}
	The proof of Theorem \ref{main} in the general case readily follows exactly as in the previous subsection by replacing the trace map by the determinant map.
	
	\begin{remark}
		Notice that in the positive characteristic case the ring of regular functions $T_R$ on the Lafforgue variety is not given anymore via the simple procedure described in the introduction.
	\end{remark}
	
	\subsection{Dependence on the central subalgebra}
	
	If we assume $k$ to be algebraically closed, then recall from the introduction that Schur's lemma guarantees that $A$ acts on every finite-dimensional simple $R$-module through a character $a:A\to k$. This implies that the set of $k$-points of $i\Laf$ doesn't depend on the choice of $A$. 
	
	In this section, we will prove that without assumptions on $k$, $i\Laf$ as a scheme does not depend on the choice of an $A$. This will follow from a relative version of Schur's lemma. 
	
	\begin{proposition}\label{relschur}
		Let $R$ be a possibly non-commutative ring containing commutative rings $A\subset A'$ in its center. Assume that $R$ is finite as an $A$-module. Let $B$ be an $A$-algebra and $M$ a finite locally free $A$-module equipped with a structure of an $\left(R\otimes_A B\right)$-module such that over every geometric point $b\in \Spec(B)$ over $a\in \Spec(A)$, $M_b$ is a simple $R_a$-module. Then the ring homomorphism $A'\to \en_B(M)$ factors through $B$. 
	\end{proposition}
	
	\begin{proof}
		The homomorphism $R\to \en_B(M)$ is surjective as it is surjective fiberwise over $\Spec(B)$ by the Jacobson density theorem \cite[Chapter XVII, Theorem 3.2]{Lang2002}. It follows that the image of the central subalgebra $A'$ is contained in $B$.  
	\end{proof}

	\begin{proposition}
		Let $R$ be a possibly non-commutative ring containing commutative rings $A\subset A'$ in its center. The natural morphism $\iLaf_{R/A'} \to \iLaf_{R/A}$ is an isomorphism.
	\end{proposition}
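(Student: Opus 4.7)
The plan is to exhibit an inverse to the natural morphism using a relative form of Schur's lemma.

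First I would identify the natural morphism. For any $A'$-algebra $B'$, the surjection $R \otimes_A B' \twoheadrightarrow R \otimes_{A'} B'$ (available because $A' \subset Z(R)$) pulls $R \otimes_{A'} B'$-module quotients back to $R \otimes_A B'$-module quotients, yielding a morphism $\Hilb_{R/A'} \to \Hilb_{R/A}$ covering $\Spec A' \to \Spec A$. Dually, every $A'$-linear form $R \to B'$ is $A$-linear, so $V_{R/A'}$ (and the divided-power variant $S^d V_{R/A'}$ in positive characteristic) embeds as a closed subscheme of $V_{R/A} \times_{\Spec A} \Spec A'$. These constructions intertwine the trace (resp.\ determinant) map, so passing to scheme-theoretic images and excising the nested loci furnishes the natural morphism $\iLaf_{R/A'} \to \iLaf_{R/A}$.

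For the inverse, the key observation is Schur's lemma: at every geometric point of $\iLaf_{R/A}$, corresponding to a simple $R \otimes_A \bar{k}$-module $M$, the central subalgebra $A'$ acts by a character $\chi'_M : A' \to \bar{k}$ extending the central character on $A$. I would realize this assignment algebraically as a morphism $\iLaf_{R/A} \to \Spec A'$ lifting the structure morphism to $\Spec A$. On each component $\iLaf^d_{R/A}$ of fixed rank $d \geq 1$, with $d$ invertible in the base field, Schur's lemma gives the identity $f_{a'} = d \cdot \chi_-(a')$ in the trace ring, so setting $\chi_-(a') := d^{-1} f_{a'} \in \mathcal{O}(\iLaf^d_{R/A})$ produces the desired $A$-algebra map $A' \to \mathcal{O}(\iLaf^d_{R/A})$.

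Combining this central-character morphism with the identity of $\iLaf_{R/A}$ yields a map $\iLaf_{R/A} \to \iLaf_{R/A} \times_{\Spec A} \Spec A'$. I would then verify it factors through the closed subscheme $\iLaf_{R/A'}$: the defining ideal of $V_{R/A'}$ inside $V_{R/A} \times_{\Spec A} \Spec A'$ is generated by the relations $a' \cdot r - r a'$, identifying $a' \in A'$ in degree zero with $a' \in R$ in degree one, and in the trace ring these translate to $\chi_-(a') \cdot f_r = f_{r a'}$, which holds by Schur. Finally I would check the two composites are identities on geometric points (by uniqueness of the central-character extension) and on the universal trace functions, hence equal as morphisms of finite $A$-schemes.

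The main obstacle lies in the positive-characteristic case with $p \mid d$: then $f_{a'} = d \cdot \chi_-(a') = 0$ and naive division by $d$ is impossible. Here one must work with the full determinant polynomial law, expanding $\det_-(X + Y a') = (X + Y \chi_-(a'))^d$; writing $d = p^m n$ with $p \nmid n$, the coefficient of $X^{d - p^m} Y^{p^m}$ is by Lucas's theorem a nonzero scalar multiple of $\chi_-(a')^{p^m}$, so one recovers $\chi_-(a')^{p^m}$ algebraically and extracts $\chi_-(a')$ via the Frobenius automorphism of the perfect base field. Showing that this extraction yields a genuine ring homomorphism $A' \to \mathcal{O}(\iLaf^d_{R/A})$, rather than only a set-theoretic map on $\bar{k}$-points, is the most delicate step of the argument.
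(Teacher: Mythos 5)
Your construction of the forward morphism and your overall strategy (produce an inverse by realizing the central character of $A'$ algebraically) are in the right spirit, but the way you realize that central character diverges from the paper and runs into two genuine problems. The paper does not work in the trace ring at all: it reduces to showing $i\Hilb_{R/A'}\to i\Hilb_{R/A}$ is an isomorphism, and then proves a \emph{relative Schur's lemma} on the universal quotient module. Namely, for a $B$-point of $i\Hilb_{R/A}$ given by a finite locally free $B$-module $M$ that is fiberwise simple, the map $R\otimes_A B\to \End_B(M)$ is fiberwise surjective by the Jacobson density theorem, hence surjective; the image of $A'$ therefore commutes with all of $\End_B(M)$ and lands in its center $B$. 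This gives a canonical lift of every $B$-point, for \emph{arbitrary} (possibly non-reduced) test rings $B$ and in any characteristic, which is exactly what is needed to produce a morphism of schemes.

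The gaps in your version are as follows. First, in characteristic zero you define $\chi_-(a'):=d^{-1}f_{a'}$ and verify the required identities (multiplicativity, and the relation $\chi_-(a')f_r=f_{ra'}$) only at geometric points; to promote these to identities in $\mathcal{O}(\iLaf^d_{R/A})$ you need to know this ring is reduced, which is not automatic since $\Laf_{R/A}$ is defined as the scheme-theoretic image of a map out of a Hilbert scheme that need not be reduced. This is repairable but must be addressed (or bypassed, as the paper does, by working with the universal module). Second, and more seriously, your positive-characteristic patch does not work: you recover $\chi_-(a')^{p^m}$ as an element of the coordinate ring, but ``extracting $\chi_-(a')$ via the Frobenius automorphism of the perfect base field'' is not an available operation --- the coordinate ring of $\iLaf^d_{R/A}$ is not perfect (already $\mathbb{F}_p[t]$ is not), so an element admitting a $p^m$-th root at every geometric point need not admit one in the ring, and no morphism of schemes is produced. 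The characteristic-free argument via surjectivity of $R\otimes_A B\to\End_B(M)$ avoids both issues and is the missing ingredient.
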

	
	\begin{proof}
		It is enough to prove that for any $A$-algebra $B$, every morphism $\Spec(B)\to \iLaf_{R/A}$ can be canonically lifted to a morphism $\Spec(B)\to \iLaf_{R/A'}$. By Proposition \ref{nhilb} the $B$-points of $i\Laf_{R/A}$ are described by finite locally free $A$-modules $M$ satisfying the hypotheses of Proposition \ref{relschur}. By the conclusion, we get that $M$ is also a point of $i\Laf_{R/A'}$.
	\end{proof}
	
	\subsection{Lafforgue's assertion}
	
	By Theorem \ref{main} applied to the case of the Hecke algebra of a compact open subgroup $K\leq G$ as in \S 2.2, we have Lafforgue's assertion \cite[pp. 36]{Lafforgue2016}. The purpose is to consider the Lafforgue variety of the whole reductive group $G$, by taking the inverse limit over all compact open subgroups. As this approach runs into some technical difficulties, we instead apply it to Hecke algebras of Bernstein components to get the following corollary of Theorem \ref{main}.
	
	\begin{corollary}
		For any reductive group $G$ over a non-archimedean local field $F$, we can construct the Lafforgue variety for all smooth irreducible representations of $G$, by taking the union of $\Laf_{H_{\fs}/Z_{\fs}}$ over all Bernstein components $\fs\in \fB(G).$
	\end{corollary}
	
	\begin{remark}
		This approach also has the advantage of allowing us to use the explicit presentations given by Theorem \ref{solleveld} due to Solleveld, which is convenient for applying the methods of the next sections.
	\end{remark}
	
	\section{The Jacobson stratification}
	
	Let $R$ be a possibly non-commutative $k$-algebra such that there is a finitely generated $k$-subalgebra $A$ of the center with $R$ being finite and locally free as an $A$-module. 
	
	Then, by Theorem \ref{main}, the projection $\Laf_{R/A}\rightarrow \Spec(A)$ is finite which implies we can stratify $\Spec(A)$ according to the cardinality of the fiber of $p$. In the case where $\cha(k)=0$, $A$ is regular, and $R$ is a locally free $A$-module, we concretely describe the above stratification using the rank of the Jacobson ideal. 
	
	%We show that if $R$ is a Cohen-Macaulay algebra over a Cohen-Macaulay center $Z(R)$, we can choose an appropriate regular subalgebra $A\subseteq Z(R)$ by Hironaka's miracle flatness criterion \cite[Theorem 25.16]{Nagata1962}. This case includes all versions of unital Hecke algebras in Section 2. 

	\subsection{Cohen-Macaulay property}
	
	For a (twisted) affine Hecke algebra, we can choose an appropriate regular subalgebra to apply the Jacobson stratification by using its explicit presentation. In this subsection, we recall that the existence of such a regular subalgebra is guaranteed by a condition that appears more often in the literature. The condition requires that $R$ is a Cohen-Macaulay module (see \cite[\href{https://stacks.math.columbia.edu/tag/0AAH}{Definition 0AAH}]{stacks-project}) over its center $Z$ which is also required to be a Cohen-Macaulay algebra, ie. a Cohen-Macaulay module over itself.

	\begin{proposition}\label{cm}
		Let $R$ be a possibly non-commutative $k$-algebra and $Z(R)$ its center. Assume $Z(R)$ is finitely generated and $R$ is a finite $Z(R)$-module. The following properties are equivalent
		\begin{enumerate}
			\item $R$ is a finite Cohen-Macaulay module over its center $Z(R)$, which is a Cohen-Macaulay $k$-algebra.
			\item $R$ is a finite free $A$-module for a finitely generated regular central subalgebra $A\subseteq Z(R)$.
		\end{enumerate}	
		If $Z(R)$ is regular, we can take $A=Z(R)$.	
	\end{proposition}
	
	\begin{proof}
		This is essentially the same as \cite[Criterion 2.5]{Bernstein1997}.
	\end{proof}
	
	\begin{example}
		The Hecke algebra $\cH_{\fs}(G)$ of a Bernstein component $\fs \in \fB(G)$ satisfies condition (1) \cite[Proposition 3.1]{Bernstein2018}.
	\end{example}
	
	\subsection{Jacobson stratification}
	
	Let $A$ be a commutative ring contained in the center of a possibly non-commutative ring $R$. From now on, we will assume that $R$ is a finite locally free $A$-module. We will also assume that $A$ contains a field $k$ of characteristic zero. 
	
	For every point $a:A\to k(a)$ of $\Spec(A)$, $k(a)$ being a field, the fibre $R_a=R\otimes_A k(a)$ is a finite-dimensional $k(a)$-algebra. The Jacobson radical $J_a=\rad(R_a)$, defined as the intersection of all maximal left ideals of $R_a$, is a 2-sided ideal which can be characterized in multiple ways, namely, it is the intersection of the annihilators of simple left $R_a$-modules, or the maximal left (or right) nilpotent ideals, see \cite[4.2,4.12]{Lam1991}. The quotient $R_a/J_a$ is a semi-simple $k$-algebra which, by the Artin-Wedderburn theorem, is isomorphic to a product of matrix algebras $R_a/J_a=\prod_{i=1}^r M_{n_i}(D_i)$ where $M_{n_i}(D_i)$ is a matrix algebra over a skew field $D_i$ containing $k(a)$ in its center. 
	
	\begin{proposition}
		The function $r_{\mathrm{Jac}}:\Spec(A) \rightarrow \mathbb{N}$ given by $a\mapsto \dim_{k(a)} J_a$ is upper semi-continuous. 
	\end{proposition}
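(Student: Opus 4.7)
The plan is to reinterpret $\dim_{k(a)} J_a$ as the corank of a global symmetric bilinear form on $R$ and then invoke the standard semi-continuity of the rank of a matrix. Since $R$ is a finite locally free $A$-module, the rank $\dim_{k(a)} R_a$ is locally constant on $\Spec(A)$; working Zariski-locally on $\Spec(A)$ I may therefore assume $R$ is free over $A$ of constant rank $r$, and it suffices to show $a\mapsto \dim_{k(a)}(R_a/J_a) = r - \dim_{k(a)}J_a$ is lower semi-continuous.

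First I would define the global \textbf{regular trace form}
\begin{equation*}
B_{R/A}\colon R\times R\to A,\qquad B_{R/A}(x,y)=\tr_{R/A}(L_{xy}),
\end{equation*}
where $L_{xy}\in\End_A(R)$ is left multiplication by $xy$; the trace is well defined because $R$ is finite locally free over $A$. Fixing an $A$-basis of $R$, the form $B_{R/A}$ is represented by an $r\times r$ symmetric matrix $M\in\mathrm{Mat}_r(A)$, and its specialization $B_a=B_{R/A}\otimes_A k(a)$ on $R_a$ is represented by $M\otimes k(a)$. The rank of a matrix over a field is lower semi-continuous as the parameter varies (the locus where some fixed $n\times n$ minor is nonzero is open), so $a\mapsto \mathrm{rk}(B_a)$ is lower semi-continuous on $\Spec(A)$.

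The main step, which I expect to be the key input, is the identification
\begin{equation*}
\rad(B_a)\;=\;J_a\qquad\text{for every }a\in\Spec(A).
\end{equation*}
The inclusion $J_a\subseteq \rad(B_a)$ follows because $J_a$ is a nilpotent 2-sided ideal of the finite-dimensional $k(a)$-algebra $R_a$, so for any $x\in J_a$ and any $y\in R_a$ the operator $L_{xy}$ on $R_a$ is nilpotent and hence has trace zero. The reverse inclusion uses the characteristic zero hypothesis: by Artin-Wedderburn, $R_a/J_a\cong\prod_{i=1}^s M_{n_i}(D_i)$, and the induced form on $R_a/J_a$ is, up to positive integer scalars $\dim_{k(a)} D_i\cdot n_i$, the product of the standard trace forms on the matrix factors, hence non-degenerate provided these integers are nonzero in $k(a)$ (which is automatic in characteristic zero). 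Therefore any $x$ with $B_a(x,R_a)=0$ lies in $J_a$.

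Combining these two ingredients, $\dim_{k(a)}J_a=r-\mathrm{rk}(B_a)$ is upper semi-continuous, proving the proposition. The only subtle point is the radical identification in the key step above; everything else reduces to linear algebra over $A$ applied to a single explicit symmetric matrix $M$.
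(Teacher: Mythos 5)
Your proposal is correct and follows essentially the same route as the paper: identify $J_a$ with the radical of the regular trace form $\tr_{R/A}(xy)$ (nilpotence of $J_a$ gives one inclusion, Artin--Wedderburn plus characteristic zero gives non-degeneracy on the semisimple quotient for the other), then invoke lower semi-continuity of the rank of the specialized Gram matrix. The paper isolates the radical identification as its Proposition \ref{jac} and packages the rank argument via determinantal ideals, but the mathematical content is identical to yours.
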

	
	The assertion will follow from yet another interpretation of the Jacobson radical as the kernel of a trace form. We recall that $R$ is a finite locally free as an $A$-module, and for every element $r\in R$, the $A$-linear operator on $R$ given by $x\mapsto rx$ has a well defined trace $\tr_{R/A}(r)$. It follows that we have a symmetric $A$-bilinear form on $R$ given by $Tr_{R/A}(x,y)=\tr_{R/A}(xy)$, or equivalently a $A$-linear map $Tr_{R/A}:R\to R^\vee$. The construction of the trace form and the bilinear form $Tr_{R/A}$ commute in the obvious way with base change and for every geometric point $a:A\to k(a)$, we have a trace form $\tr_a:R_a\to k(a)$ and a symmetric bilinear form $Tr_{R/A,a}$ on $R_a$, or equivalently a linear form $Tr_{R/A,a}:R_a\to R_a^\vee$. 
	
	\begin{proposition}\label{jac}
		For every point $a:A\to k(a)$ of $\Spec(A)$, the Jacobson radical $J_a$ is the kernel of the trace form $Tr_{R/A,a}:R_a\to R_a^\vee$.  
	\end{proposition}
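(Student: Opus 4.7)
The plan is to prove the equality $J_a = \ker(\Tr_{R/A,a})$ by two inclusions, with the characteristic zero hypothesis entering only in the harder direction.

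The easy inclusion $J_a \subseteq \ker(\Tr_{R/A,a})$ follows from nilpotency. Since $R_a$ is finite-dimensional, its Jacobson radical satisfies $J_a^N = 0$ for some $N\in\N$, so for $x \in J_a$ and any $y \in R_a$, $xy \in J_a$ is nilpotent. The left multiplication operator $L_{xy}$ on $R_a$ then satisfies $L_{xy}^N = L_{(xy)^N} = 0$, so it is nilpotent and has vanishing trace; hence $\Tr_{R/A,a}(x,y)=0$ for every $y$.

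For the reverse inclusion, I would first show that $\tr_{R_a/k(a)}(L_z)$ depends only on the image $\bar z$ of $z$ in the semisimple quotient $S := R_a/J_a$. The nilpotent filtration $R_a \supset J_a \supset J_a^2 \supset \cdots \supset J_a^N = 0$ is preserved by each $L_z$. Each graded piece $\gr^i := J_a^i / J_a^{i+1}$ is killed by $J_a$ and hence carries a natural $S$-module structure on which $z$ acts through $\bar z$, and additivity of the trace in a filtration yields
\[
\tr_{R_a/k(a)}(L_z) \;=\; \sum_{i \geq 0} \tr_{\gr^i}(L_{\bar z}) \;=\; \tr_V(\bar z),
\]
where $V := \bigoplus_i \gr^i$, viewed as an $S$-module. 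Consequently $\Tr_{R/A,a}(x,y) = \tr_V(\bar x\bar y)$, so the form descends to a symmetric bilinear pairing on $S$. Crucially, $V$ is a faithful $S$-module, because its zeroth graded piece $\gr^0 = S$ is the left regular representation.

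It then remains to show that this induced pairing on the semisimple algebra $S$ is non-degenerate. By Artin--Wedderburn, $S \cong \prod_{i=1}^{r} M_{n_i}(D_i)$, and any faithful module splits compatibly as $V \cong \bigoplus_i V_i^{m_i}$ with every $m_i \geq 1$, where $V_i$ is the simple module of the $i$th factor. The pairing therefore decomposes over the factors, and on each factor $M_{n_i}(D_i)$ the trace on $V_i$ is an explicit positive-integer multiple of the reduced trace of the central simple algebra. In characteristic zero this multiplier is non-zero in $k(a)$, and since the reduced-trace pairing on a central simple algebra is non-degenerate, the form is non-degenerate on every factor, and hence on $S$. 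Combined with $m_i\geq 1$, this forces $\bar x = 0$, i.e., $x \in J_a$.

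The main obstacle is the descent-plus-non-degeneracy step: one must identify the induced bilinear form on $R_a/J_a$ correctly and argue its non-degeneracy, which is precisely where the characteristic zero hypothesis is essential (through the non-vanishing of the multiplier relating the module trace to the reduced trace). In positive characteristic, one would have to replace the trace by a determinant-like invariant, in parallel with the construction of the Lafforgue variety itself.
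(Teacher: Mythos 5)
Your proof is correct and follows the same route as the paper: the inclusion $J_a\subseteq\ker(\Tr_{R/A,a})$ from nilpotency of the radical, and the reverse inclusion by descending the form to the semisimple quotient $R_a/J_a$ and invoking Artin--Wedderburn together with characteristic zero for non-degeneracy. You simply supply the details (the filtration/faithfulness argument and the reduction to the reduced-trace pairing) that the paper's two-line proof leaves implicit.
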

	
	\begin{proof}
		Since $J_a$ is a nilpotent ideal, for every $x\in J_a$ and $y\in R_a$, we have $\tr_{a}(xy)=0$. It follows that $J_a$ is contained in the kernel of $Tr_{R/A,a}$. Moreover, the Artin-Wedderburn theorem implies that $Tr_{R/A,a}$ induces a non-degenerate bilinear form on $R_a/J_a$ and therefore $J_a$ is exactly equal to the kernel of $Tr_{R/A,a}$.
	\end{proof}
	
	We will now construct the stratification of $\Spec(A)$ by the rank of the Jacobson ideal using the concept of determinantal ideals. Assume that $R$ is a locally free $A$-module of rank $n$. Locally for the Zariski topology we may assume that $R$ is a free $A$-module of rank $n$, and the trace form $Tr:R\to R^\vee$ is given by a $n\times n$-matrix. For every positive integer $i$, we define $I_i$ to be the ideal of $A$ such that locally for the Zariski topology, $I$ is generated by the minors to the order $n-i+1$ of the local matrix of $Tr$. We know a chain of inclusions of ideals $0=I_0 \subset I_1 \subset \cdots $ which induces a chain of inclusion of closed subsets $\bar X_0 \supset \bar X_1 \supset \cdots $ where $\bar X_i =\Spec(A/I_i)$. Over $X_i$ the complement of $\bar X_{i+1}$ in $\bar X_i$, the rank of the Jacobson radical is constant of value $i$. 
	
	In fact, over $X_i$ the trace form $Tr_{X_i}: R\otimes_A \cO_{X_i} \to R^\vee \otimes_A \cO_{X_i}$ has kernel a locally free $\cO_{X_i}$-module $J_i$ of rank $i$, and image a locally free $\cO_{X_i}$-module $\bar R_{X_i}$ of rank $n-i$. The trace form $Tr_{X_i}$ induces a non-degenerate symmetric bilinear form $\bar Tr_{X_i}$ on $\bar R_{X_i}$. In particular, for every point $a:A\to k(a)$ of $\Spec(A)$ belonging to the stratum $X_i$, $\bar R_{X_i}\otimes_{\cO_{X_i}} k(a)$ is a semi-simple algebra over $k(a)$. Let $\bar a:A\to \overline{k(a)}$ a geometric point over $a$. Then $\bar R_{X_i} \otimes_{\cO_{X_i}} \overline{k(a)}$ is isomorphic to a product of matrix algebras
	$$ \bar R_{X_i}\otimes_{\cO_{X_i}} \overline{k(a)}= \prod_{i=1}^r M_{n_i}(\overline{k(a)})
	$$
	where $\underline n(a)=(n_1,\ldots,n_r)$ is unordered sequence of positive integers depending only on $a$.

	\begin{proposition}\label{strat}
		The function $a\mapsto \underline n(a)$ is locally constant on $X_i$. In particular, $a\mapsto r(a)$ is a locally constant function on $X_i$.
	\end{proposition} 
	
	\begin{proof}
		Since $\bar R_{X_i}$ is a locally free $\cO_{X_i}$-module equipped with a structure of an associative algebra which is fiberwise semisimple over $X_i$, its invertible elements define a group scheme $\cG_{\bar R_{X_i}}$ over $X_i$ by Proposition \ref{groupsch}. The geometric fiber over a geometric point $\bar a$ is isomorphic to $\GL_{n_1}\times \cdots \times \GL_{n_r}$. Thus $\cG_{\bar R_{X_i}}$ is a smooth reductive group scheme and the function  $a\mapsto \underline n(a)$ is locally constant by \cite[Expos\'e XIX, Corollaire 2.6]{Artin1963}.
	\end{proof}

	\section{Generalized discriminants and irreducibility of induced representations}
	
	Let $R$ be a possibly non-commutative $k$-algebra that is a free finite module over a finitely generated subalgebra $A$ of its center $Z(R)$. We assume $k$ to be an algebraically closed field of characteristic $0$.
	
	In this section, we focus on the open dense stratum $X_0\subseteq \Spec A$. If the Jacobson radical of $R$ is trivial, $X_0$ is the semisimplicity locus of $R$. We characterize $X_0$ as the complement of the zero set of a generalized discriminant ideal.  
	
	For the Hecke algebra $\cH_{\fs}$ of a Bernstein component $\fs\in \fB(G)$. Then, the projection $\Laf_{\cH_{\fs}/Z_{\fs}}\rightarrow \Spec(Z_{\fs})$ sends a smooth irreducible representation $\rho$ to its cuspidal support $\cs(\rho):=(M,\sigma)$, defined by $\rho\hookrightarrow i_M^G(\sigma)$. For a generic $(M,\sigma)$, the induced representation $i_M^G(\sigma)$ is irreducible. As a main application of the results in this section, we prove Theorem \ref{ind} providing a computational criterion for the irreducibility of $i_M^G(\sigma)$ outside a singular locus.
	
	We provide computational tools for the discriminant in cases where there is an explicit presentation as in the case of (twisted) affine Hecke algebras. In particular, we compute the discriminant for the Iwahori-Hecke algebra of a split reductive $p$-adic group, first for the case of an adjoint group where the center is already regular, and then for the general case, where we need to choose a regular subalgebra. 
	
	\subsection{Definition and properties}
	
	If $f: R\rightarrow R'$ is a map of free $A$-modules of rank $n$, the $n$-th exterior power $\nex f : \nex R \rightarrow \nex R'$ is a map of free $A$-modules of rank $1$ \cite[Chapter 7, Theorem 8.1]{Bourbaki1989}. In the case $R'=R$, by means of the canonical isomorphism $\en_A(A)\cong A$ given by the inverse homomorphisms $a\rightarrow r_a(x)=ax$ and $r\rightarrow r(1)$, we can canonically associate to $f: R\rightarrow R$ its determinant $\det(f)$.
	
	From now on we assume $R$ is also an $A$-algebra.
	
	\begin{definition}\label{norm}
		The norm function $N_{R/A}: \en_A(R)\rightarrow A$ is the map sending an endomorphism $f\in \en_A(R)$ to $N_{R/A}(f):=\det(f).$ If $r\in R$, by associating to $r$ the endomorphism $h_r\in \ed_A(R)$ given by $h_r(x)=rx$, we also define $N_{R/A}(r):=N_{R/A}(h_r).$
	\end{definition}
	
	We recall some elementary properties of the norm.
	
	\begin{lemma}\label{normprop}
		For the norm function $N_{R/A}$ we have
		\begin{itemize}
			\item $N_{R/A}(fg)=N_{R/A}(f)N_{R/A}(g),$
			\item If $a\in A$, $N_{R/A}(a)=a^n$ where $n$ is the rank of $R$ over $A$.
		\end{itemize}
	\end{lemma}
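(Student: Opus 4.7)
The plan is to reduce to the free case and then invoke standard properties of determinants of matrices. Since $R$ is locally free of rank $n$ as an $A$-module and both claimed identities are equalities of elements of $A$, it suffices to verify them after passing to a Zariski cover of $\Spec(A)$ on which $R$ trivializes. So I would fix, without loss of generality, an isomorphism $R \cong A^n$ of $A$-modules, which identifies $\End_A(R)$ with the matrix algebra $\Mat_{n\times n}(A)$ and $N_{R/A}$ with the usual matrix determinant $\det \colon \Mat_{n\times n}(A)\to A$.

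For the first identity, composition of $A$-linear endomorphisms corresponds under this trivialization to matrix multiplication, so the statement $N_{R/A}(fg)=N_{R/A}(f)N_{R/A}(g)$ becomes the classical multiplicativity $\det(MN)=\det(M)\det(N)$ for $M,N\in \Mat_{n\times n}(A)$, which is a formal consequence of the Leibniz formula over any commutative ring. For the second identity, I would observe that since $A$ sits in the center of $R$, left multiplication by $a\in A$ on $R$ coincides with the scalar action of $a$ on $R$ viewed as an $A$-module. Under the chosen trivialization $R\cong A^n$, this operator is represented by the diagonal matrix $a\cdot I_n$, whose determinant is $a^n$, giving $N_{R/A}(a)=a^n$.

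I do not anticipate any genuine obstacle here: both claims are essentially repackagings of standard determinant identities once one unwinds the definition of $N_{R/A}$. The only mild point to be careful about is the passage from the locally free case to the free case, but since $\det$ of an $A$-linear endomorphism of a locally free module is defined by gluing the local determinants and two elements of $A$ that agree on a Zariski cover are equal, this causes no trouble. Independence of the chosen trivialization is automatic because any change of basis conjugates the matrix of $f$ by an invertible matrix, and determinant is invariant under conjugation.
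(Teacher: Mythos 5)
Your proposal is correct and follows essentially the same route as the paper, which likewise appeals to multiplicativity of the determinant for the first item and identifies $a$ with a scalar matrix for the second; note that the paper has already reduced to the case where $R$ is free over $A$ (stated just before Definition \ref{disc}), so your careful handling of the locally free case, while sound, is not needed there.
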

	
	\begin{proof}
		The first part follows from multiplicativity of the determinant. For the second, $a$ can be identified with a scalar matrix.
	\end{proof}
	
	The next Lemma is less trivial than it may appear, for a proof, see \cite[Appendix B, Lemma 4]{Cassels1986}.	
	
	\begin{lemma}
		If $B$ is a commutative $A$-algebra that is free as an $A$-module and $C$ is a $B$-algebra such that $C$ is locally free over $B$, we have
		$$N_{C/A}=N_{B/A}\circ N_{C/B}.$$
		In particular, if $n$ is the rank of $C$ over $B$, $$N_{C/A}(b)=\left( N_{B/A}(b) \right)^n$$
	\end{lemma}
	
	When $R'\neq R$, we can only identify $\nex f$ with an element of $A$ after choosing bases. Nonetheless, if $R'=R^{\vee}$, a basis $b=\{r_1,\ldots, r_n\}$ of $R$ as an $A$-module uniquely determines a dual basis $b^{\vee}$ of $R^{\vee}:=\Hom_A(R,A)$. By the universal property of free modules, we now have canonical identifications 
	
	\begin{eqnarray*}
		\nex R \cong A\\
		\nex R^{\vee} \cong A\\
		\left(\nex f\right)_{b}\in \en_A(A)\cong A
	\end{eqnarray*}
	
	If $\bar{b}=Mb$ is another basis for $R$, then $\det(M)$ is an invertible element of $A$, and the dual basis is given by $\bar{b}^{\vee}:={}^tMb^{\vee}.$ Thus,
	$$\left(\nex f\right)_{\bar{b}}=\det(M)\cdot \left(\nex f\right)_{b} \cdot \det({}^tM)=\det(M)^2\cdot \left(\nex f\right)_{b}$$
	
	\begin{definition}\label{disc}
		Let $Tr_{R/A}: R\otimes_A R\rightarrow A$ be the trace form defined by $Tr_{R/A}(x,y)=tr_A(xy)$. We consider it as a function $Tr_{R/A}: R\rightarrow R^{\vee}$. By the preceding paragraph, all possible choice of a basis $b$ for $R$ define the same element $\left(\nex Tr_{R/A}\right)_b$ up to multiplication by $A^{\times}$, and therefore generate the same locally principal ideal $d_{R/A}.$
		
		We call $d_{R/A}$ the discriminant of $R$ over $A$.
	\end{definition} 
	
	\begin{remark}
		In the case where $R$ is free over $A$, any choice of a generator for $d_{R/A}$ provides us with the same regular function on $\Spec(A)$, so we often treat $d_{R/A}$ as a function.
	\end{remark}
	
	\begin{remark}
		In the case of number rings, Definition \ref{disc} agrees with the classical discriminant of algebraic number theory \cite[\S III.6]{Serre1979}. 
	\end{remark}
	
	As we mainly use the Jacobson stratification over the open dense stratum $X_0$, our definition is motivated by the following lemma.
	
	\begin{lemma}\label{vanish}
		The open stratum $X_0$ in the Jacobson stratification of $\Spec A$ is the complement of the zero set $V(d_{R/A})$
	\end{lemma}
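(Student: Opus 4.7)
The plan is to chain together the fiberwise characterization of the Jacobson radical from Proposition~\ref{jac} with the definition of $d_{R/A}$ as the determinant of the global trace form. Concretely, $X_0$ is by construction the stratum where the Jacobson radical $J_a = \rad(R_a)$ vanishes, i.e.\ where $R_a$ is semisimple. So I want to show that for $a : A \to k(a)$, one has $J_a = 0$ if and only if $d_{R/A}(a) \neq 0$.

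First I would recall that the formation of the trace form $Tr_{R/A} : R \to R^\vee$ commutes with arbitrary base change: for any point $a$, the fibre of $Tr_{R/A}$ at $a$ is precisely the trace form $Tr_{R/A,a} : R_a \to R_a^\vee$ of the finite-dimensional $k(a)$-algebra $R_a$. This is immediate from the definition $Tr_{R/A}(x,y) = \tr_{R/A}(xy)$ together with the fact that the trace of an endomorphism of a locally free module behaves well under base change.

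Next, by Proposition~\ref{jac}, $J_a = \ker(Tr_{R/A,a})$. Since $R$ is locally free of constant rank over $A$, so is $R^\vee$, and $R_a$ and $R_a^\vee$ have the same dimension over $k(a)$. Therefore $J_a = 0$ is equivalent to $Tr_{R/A,a}$ being an isomorphism, which in turn is equivalent to $\det(Tr_{R/A,a}) \neq 0$ after choosing any $k(a)$-linear identification $R_a^\vee \cong R_a$.

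Finally I would match this determinant with the value of the discriminant at $a$. By Definition~\ref{disc}, working locally where $R$ is a free $A$-module, $d_{R/A}$ is the principal ideal generated by $N_{R/A}(Tr_{R/A}) = \det(Tr_{R/A})$, computed after any identification $R^\vee \cong R$; different identifications differ by an element of $A^\times$, and reducing modulo the maximal ideal at $a$ recovers $\det(Tr_{R/A,a})$ up to $k(a)^\times$. Hence $a \in V(d_{R/A})$ if and only if $\det(Tr_{R/A,a}) = 0$ if and only if $J_a \neq 0$ if and only if $a \notin X_0$. Since the statement is Zariski-local on $\Spec(A)$, the local freeness assumption suffices. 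There is really no hard step here; the only thing to be careful about is the compatibility of the chosen identification $R^\vee \cong R$ with base change, which is harmless because any two choices differ by a unit.
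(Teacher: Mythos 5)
Your proof is correct and follows the same route as the paper's: it combines Proposition~\ref{jac} (the Jacobson radical is the kernel of the fibrewise trace form) with the definition of $d_{R/A}$ as the determinant of the trace form, noting that the choice of identification $R^\vee \cong R$ only affects the result by a unit. The paper's own argument is just a terser version of exactly this chain of equivalences.
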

	
	\begin{proof}
		By Definition \ref{disc}, the zero set is exactly the locus where the trace form is an isomorphism, thus the locus where the Jacobson radical is trivial by Proposition \ref{jac}.
	\end{proof}
	
	In the case of number fields, we have an elementary formula allowing us to compute the discriminant of a tower of extensions in terms of the discriminants of the intermediate steps. As stated in the introduction, it turns out it can be generalized to our case. We recall \Cref{transd}.
	
	\transd*
	
	\begin{proof}
		Let $X=\Spec A, Y=\Spec B, Z=\Spec C$ and $g:Z \rightarrow Y$ and $f:Y\rightarrow X$ the maps corresponding to inclusion.
		
		Let $R_{Y/X}$ be the ramification divisor for $f$. By reducing to the local case, $f_*R_{Y/X}=div(d_{B/A})$. We use the relative short exact sequence of K{\"a}hler differentials, where injectivity follows by the fact that all maps are smooth
		$$0\rightarrow \Omega_{B/A}\otimes_B C\rightarrow \Omega_{C/A}\rightarrow \Omega_{C/B}\rightarrow 0$$		
		We take determinants in the sense of \cite[Exercise II.6.11]{Hartshorne1977}, to get
		$$\det(\Omega_{C/A})\cong \det(\om_{B/A}\otimes_B C)\otimes \det(\om_{C/B}).$$		
		Now by the smoothness of $f,g$ we have  $\det(\om_{C/A})=\omega_{C/A}$, $\det(\om_{B/A})=\omega_{C/B}$ and thus $\det(\om_{B/A}\otimes_B C)=g^*\omega_{B/A}.$
		Therefore,
		$$\omega_{C/A}\cong \omega_{C/B}\otimes g^*\omega_{B/A}.$$
		We know that $\omega_{C/A}=\mathcal{L}(R_{Z/X})$ is the invertible sheaf corresponding to the ramification divisor $R_{Z/X}$. Thus, taking associated divisors,		
		$$ R_{Z/X} \cong R_{Z/Y}+g^*R_{Y/X}$$		
		We consider the pushforward by $f\circ g$ to get the divisor corresponding to the discriminant. Since $g_*g^*$ for divisors is multiplication by the degree, and $f_*div(z)=div(N(z))$, we get 
		\begin{eqnarray*}
			div(d_{C/A}) & \cong & f_*div(d_{C/B})+f_*(nR_{Y/X})\\
			& \cong &  div(N_{B/A}(d_{C/B}))+div((d_{B/A})^n)\\
			& \cong &  div((d_{B/A})^n N_{B/A}(d_{C/B}))\\
		\end{eqnarray*}
	\end{proof}
	
	\begin{remark}
		We can also deduce Lemma \ref{transd} by repeated application of the generalized Riemann-Hurwitz formula.
		
		Indeed, we have
		\begin{eqnarray*}
			R_{Z/X} & \cong & K_Z-(f\circ g)^*K_X \\
			& \cong & K_Z-g^* f^*K_X \\
			& \cong & K_Z-g^*(K_Y-R_{Y/X})\\
			& \cong & R_{Z/Y}+g^*R_{Y/X}\\
		\end{eqnarray*}
		and we conclude as before.
	\end{remark}

	\subsection{Irreducibility of induced representations}
	
	In the case of the Hecke algebra $\cH_{\fs}$ of a Bernstein component $\fs \in \fB(G)$, we know after \cite[Proposition 3.1]{Bernstein2018} that $\cH_{\fs}$ is a finite Cohen-Macaulay module over its center $Z_{\fs}$ which is itself a Cohen-Macaulay algebra. We can apply the Lafforgue variety construction for $R=\cH_{\fs}$ and $A=Z_{\fs}$, but using Proposition \ref{cm} we can also apply it for $A$ being a regular algebra contained in $Z_{\fs}$ such that $Z_{\fs}$ is a finite $A$-module. If $A$ is regular, then both $\cH_{\fs}$ and $Z_{\fs}$ are finite locally free $A$-modules. In this case, the group scheme $\cG_{R/A}$ is smooth acting on the Hilbert scheme $\Hilb_{R/A}$.
	
	If $f:\Spec(Z_{\fs})\rightarrow \Spec(A)$ is the projection corresponding to the inclusion $A\subseteq Z_{\fs}$, we define $Z(f)$ to be the closed subset of $\Spec(Z_{\fs})$ where $f$ is not smooth. Let $X_0$ be the open dense stratum of $\Spec(Z_{\fs})$ given by the cardinality of the fiber of the projection from the Lafforgue variety. We identify a cuspidal datum $(M,\sigma)$ with the corresponding point in $\Spec(Z_{\fs})$.
	
	We recall and prove \Cref{ind}.
	\ind*
	
	\begin{proof}
		By Theorem \ref{main}, we get finite projections
		
		$$\Laf_{\cH_{\fs}/Z_{\fs}}\cong \Laf_{\cH_{\fs}/A}\xrightarrow{p} \Spec(Z_{\fs})\xrightarrow{f} \Spec(A)$$ 
		
		By the definition of $p$, the Jordan-H\"older constituents of the induction correspond to the points in the fiber, and thus  $|JH(i_M^G(\sigma))|=|p^{-1}(M,\sigma)\cap \iLaf_{\cH_{\fs}/A}|$. Since generically an induced representation is irreducible, over $X_0$ the cardinality of the fiber is $1$ which proves the first assertion.
		
		Let $Y_0$ be the open dense stratum of the Jacobson stratification for $\Spec(A)$, and $n=deg(f)$. Then, for a generic point $a\in \Spec(A),$ the cardinality of $f\circ p$ is $n$, and thus the cardinality of the fiber over the point $a\in \Spec(A)$ is $\geq n$ with equality if and only if $a\in Y_0$. 
		
		Since the fibers of $f$ outside the singular locus have cardinality $n$, Lemma \ref{vanish} implies the second assertion.
	\end{proof}
	
	\begin{example}
		Let $H$ be the Iwahori-Hecke algebra of $G=\GL_2(F)$. We consider the split torus $T$ consisting of diagonal matrices and the Weyl group $W\cong S_2$. By Proposition \ref{beriha}, $H$ is generated over the group algebra of the cocharacter lattice $R\cong \mathbb{C}[x_1^{\pm},x_2^{\pm}]$ by $T_e=1$ and an element $T_s$ satisfying 
		\begin{eqnarray}
			(T_s+1)(T_s-q)=0 \label{quad}\\
			T_sx_1=x_2T_s+(q-1)x_1 \label{in}
		\end{eqnarray}
		The center is $R^W=\mathbb{C}[x_1^{\pm},x_2^{\pm}]^{S_2}$. A cuspidal datum in this case corresponds to a choice of an unordered pair of complex numbers defining an unramified character of $T$.
		
		Let $V$ be a simple $H$-module. The subalgebra $R$ is abelian therefore we can choose a common eigenvector $v\in V$. By Equation \ref{in}, every element $h\in H$ can be written $h=T_er_1+T_sr_2$ for $r_1,r_2\in R$. Since $v$ is cyclic by simplicity of $V$, $\dim(V)\leq 2$. By Equation \ref{quad}, if $\dim(V)=2$, then $tr_{T_s}(V)=q-1$, and if $\dim(V)=1$ then $T_s$ acts as either $-1$ or $q$.
		
		Therefore, the trace ring is $T_H=\mathbb{C}[x_1^{\pm},x_2^{\pm}]^{S_2}\oplus \mathbb{C}[x_1^{\pm}]\oplus \mathbb{C}[x_1^{\pm}]$. The Lafforgue variety and the projection are therefore roughly given by Figure \ref{fig:key}.
		
		\begin{figure}[h]
			
			\centering
			
			\begin{tikzpicture}
				\draw (0,3.9)--(3,3.9)--(4,5)--(1,5)--(0,3.9);
				\draw (0,5.4)--(4,6.5);
				\draw (0,2.7)--(4,3.8);
				\draw[dashed] (0,3.9)--(4,5);
				\draw[thick,->] (2,2.8)--(2,1.8);
				\draw (0,0.4)--(3,0.4)--(4,1.5)--(1,1.5)--(0,0.4);
			\end{tikzpicture}
			\caption{Projection from the Lafforgue variety to the Bernstein variety for Iwahori representations of $GL_2(F)$}
			\label{fig:key}
		\end{figure}
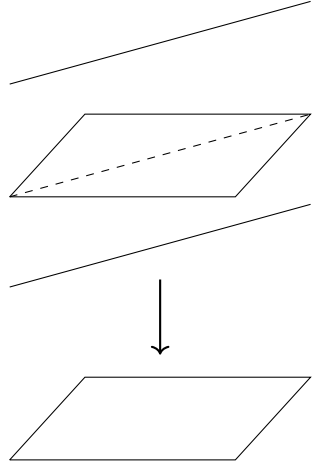
		In this case, the center $R^W$ is already regular, and the discriminant is
		$$d_{H/R^W}=(x_2-qx_1)^2(x_1-qx_2)^2,$$
		which retrieves that the induction $i_{M}^G(\chi_1,\chi_2)$ is irreducible if and only if $\chi_1\chi_2^{-1}\neq q^{\pm}$. When this is not the case, the Jordan-Holder constituents of the induction are an irreducible character and a Steinberg representation corresponding to the two other connected components shown in Figure \ref{fig:key}.
	\end{example}
	
	\subsection{Discriminant of adjoint reductive groups}
	
	The center $R^W$ of the Iwahori-Hecke algebra is also the coordinate ring of $\hat{T}\git W$ where $\hat{T}$ is the dual torus. In this subsection we assume $G$ is an adjoint group, thus $\hat{T}$ is the torus of a simply-connected group, and in this case $\hat{T}\git W\cong \bA^r$ where $r$ is the rank of $G$. Thus, for an adjoint group $R^W$ is regular. In this case, we can retrieve Kato's result by computing $d_{H/R^W}$.	
	
	The computation essentially will be performed in two steps, from $H$ to $R$ and from $R$ to $R^W$, in a similar fashion to Lemma \ref{transd}, which cannot be used directly since $H$ is non-commutative. It turns out that the discriminant behaves in a similar way nonetheless.
	
	Let $W=\{w_1,\ldots, w_n\}$ and $I_{w_i}, K_{w_i}$ be the intertwiners/ normalized intertwiners as we defined them in Section 2. Then, $d_{H/R^W}$ is the discriminant of the lattice $\{I_{w_i}\pi^{\mu_j}\}_{i,j}$ for proper $\mu_j$.
	
	Thus, we need to compute the determinant $$\det(\{tr(I_{w_i}\pi^{\mu_j} I_{w_k}\pi^{\mu_l})\}_{i,j,k,l\in [n]})=\det(\{tr(I_{w_i} I_{w_k}\pi^{w_k(\mu_j)}\pi^{\mu_l})\}_{i,j,k,l\in [n]}).$$
	
	We notice that for $w_i\neq w_k^{-1}$ the trace is zero, because elements $I_w$ for $w\neq e$ permute the generelized eigenvectors, so the matrix has $n$ blocks of size $n\times n$. Also, we recall that setting $e_a=1-q^{-1}\pi^{a^{\vee}}, d_a=1-\pi^{a^{\vee}}$, gives
	$$I_wI_{w^{-1}}=\prod_{a\in R_w} \frac{e_ae_{-a}}{d_ad_{-a}}$$
	by Lemma \ref{int}.
	
	Thus, we can simplify the calculation using the following.
	
	\begin{lemma}\label{nctrans}
		Let $R$ be a commutative algebra over the commutative algebra $A$. Let $p, r_1,\ldots, r_n\in R.$ Then we have that
		$$\det(\{tr(pr_ir_j)\})=N_{R/A}(p)\cdot \det(\{tr(r_ir_j)\}).$$
	\end{lemma}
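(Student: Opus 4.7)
The plan is to reduce the identity to a matrix factorization by interpreting multiplication by $p$ as an $A$-linear endomorphism. Assuming (as the context demands) that $r_1,\ldots,r_n$ form an $A$-basis of $R$, I would first introduce the matrix $P=(P_{kj})$ of the multiplication map $L_p:R\to R$, $x\mapsto px$, in this basis, so that $pr_j=\sum_k P_{kj}r_k$. By definition of the norm, $N_{R/A}(p)=\det(P)$.

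Next I would manipulate the $(i,j)$-entry of the left-hand matrix using $A$-linearity of the trace together with the commutativity of $R$:
\begin{equation*}
\tr(pr_ir_j)=\tr(r_i\cdot pr_j)=\sum_k P_{kj}\tr(r_ir_k).
\end{equation*}
This exhibits the matrix $\{\tr(pr_ir_j)\}_{i,j}$ as the product $\{\tr(r_ir_k)\}_{i,k}\cdot P$. Taking determinants then gives
\begin{equation*}
\det\{\tr(pr_ir_j)\}=\det\{\tr(r_ir_j)\}\cdot\det(P)=N_{R/A}(p)\cdot\det\{\tr(r_ir_j)\},
\end{equation*}
which is the claim.

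There is no real obstacle here; the content is a one-line linear algebra computation, and the two nontrivial ingredients (commutativity of $R$ so that $r_i\cdot pr_j=pr_ir_j$, and the identification of $\det(P)$ with $N_{R/A}(p)$) are built into the hypotheses and the definition of the norm. The only thing to keep in mind when applying the lemma in the next subsection is that the role of the ``basis'' $r_1,\ldots,r_n$ is played by the chosen $A$-basis $\{\pi^{\mu_j}\}$ of $R$ over its subring $R^W$ (or the analogous basis in the tower being analyzed), so that the trace and norm on both sides are taken with respect to the same free module structure.
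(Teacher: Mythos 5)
Your proof is correct, and it takes a cleaner route than the paper's. The paper factors the matrix $\{\tr(pr_ir_j)\}$ through a basis of generalized eigenvectors: writing $\kappa_k$ for the eigenvalues of $p$ and $\lambda_k^i$ for those of $r_i$, it expresses the matrix as a product of two eigenvalue matrices and extracts $\kappa_1\cdots\kappa_n=N_{R/A}(p)$. That argument implicitly passes to a field (or algebraic closure) where the commuting multiplication operators can be simultaneously triangularized. Your factorization $\{\tr(pr_ir_j)\}=\{\tr(r_ir_k)\}\cdot P$, with $P$ the matrix of $L_p$ in the basis $r_1,\ldots,r_n$, avoids eigenvalues entirely: it needs only $A$-linearity of $\tr$, commutativity of $R$ (to rewrite $pr_ir_j=r_i\cdot pr_j$), and the definition $N_{R/A}(p)=\det(P)$, so it works over an arbitrary base ring and in fact for any $A$-linear functional in place of the trace. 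You are also right to flag the hypothesis that the $r_i$ form an $A$-basis: the lemma as stated omits it, but it is needed both for $\det(P)$ to equal the norm and for the identity itself (take $n=1$, $r_1=1$, $R=A^2$ to see it fail otherwise), and it is satisfied in the paper's application where the $r_i$ are the basis $\{\pi^{\mu_j}\}$ of $R$ over $R^W$.
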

	
	\begin{proof}
		Consider a basis of generalized eigenvectors $v_i$ and let $\kappa_i$ be the eigenvalues of $p$ and $\lambda_i^j$ be the eigenvalues of $r_j.$ Then $tr(pr_ir_j)=\sum \kappa_k\lambda_k^i\lambda_k^j.$
		We therefore have
		\[
		\begin{pmatrix}
			tr(pr_1^2) & \ldots & tr(pr_1r_n) \\
			\vdots & \ddots & \vdots \\
			tr(pr_nr_1) & \ldots & tr(pr_n^2)
		\end{pmatrix}=
		\begin{pmatrix}
			\kappa_1\lambda_1^1 & \ldots & \kappa_1\lambda_1^n \\
			\vdots & \ddots & \vdots \\
			\kappa_n\lambda_n^1 & \ldots & \kappa_n\lambda_n^n
		\end{pmatrix}\cdot
		\begin{pmatrix}
			\lambda_1^1 & \ldots & \lambda_n^1 \\
			\vdots & \ddots & \vdots \\
			\lambda_1^n & \ldots & \lambda_n^n
		\end{pmatrix}
		\]		
		The above product is equal to
		\[
		\kappa_1\cdots \kappa_n\cdot \begin{pmatrix}
			\lambda_1^1 & \ldots & \lambda_1^n \\
			\vdots & \ddots & \vdots \\
			\lambda_n^1 & \ldots & \lambda_n^n
		\end{pmatrix}\cdot
		\begin{pmatrix}
			\lambda_1^1 & \ldots & \lambda_n^1 \\
			\vdots & \ddots & \vdots \\
			\lambda_1^n & \ldots & \lambda_n^n
		\end{pmatrix}=
		\det(p)\cdot 
		\begin{pmatrix}
			tr(r_1^2) & \ldots & tr(r_1r_n) \\
			\vdots & \ddots & \vdots \\
			tr(r_nr_1) & \ldots & tr(r_n^2)
		\end{pmatrix}
		\]
	\end{proof}	
	By Lemma \ref{nctrans}, we get 
	$$d_{H/R^W}=\left(\prod_{i=1}^n\prod_{a\in R_{w_i}} N_{R/R^W}\left(\frac{e_ae_{-a}}{d_ad_{-a}}\right)\right)\cdot d_{R/R^W}^n$$
	Notice that we also have $$\prod_{i=1}^n\prod_{a\in R_{w_i}} \frac{e_ae_{-a}}{d_ad_{-a}}=\prod_{a\in \Phi} \prod_{w\in W, a\in R_w} \frac{e_ae_{-a}}{d_ad_{-a}}=\left(\prod_{a\in \Phi}\frac{e_ae_{-a}}{d_ad_{-a}}\right)^{n/2},$$
	so, since this element is $W$-invariant and thus by Lemma \ref{normprop} its norm is itself to the $n$-th power, we get the general formula	
	\begin{equation}\label{eqn}
		d_{H/R^W}=\left(\prod_{a\in \Phi}\frac{e_ae_{-a}}{d_ad_{-a}}\right)^{n^2/2}\cdot d_{R/R^W}^n.
	\end{equation}	
	We can compute the discriminant for $R/R^W$. Indeed, it is enough to calculate the ramification divisor of the map $\Spec R\rightarrow \Spec R^W$. Ramification happens when $d_a=0$ for some $a$. Indeed, in that case, the corresponding homomorphism $R\rightarrow k$ was $s_a$-invariant, and in that case two sheets degenerate in one in every ramified point. Thus, $d_a$ appears with an exponent of $1$ in the ramification divisor. Pushed forward, we have that the discriminant of the extension $R/R^W$ is $\left(\prod_{a\in \Phi} d_a\right)^n.$ This computation is also carried out algebraically by Steinberg \cite[pp. 125-127]{Steinberg1974}.
	
	Combined with the fact that $d_a=d_{-a}$ up to an invertible element, (\ref{eqn}) shows that for an adjoint group $G$ we have
	
	\begin{proposition}
		If $G$ is adjoint, we have 
		$$d_{H/R^W}=\left(\prod_{a\in \Phi} e_ae_{-a}\right)^{n^2/2}.$$
	\end{proposition}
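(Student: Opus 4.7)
The proof is essentially a direct substitution combining the two preceding identities already established in this subsection, together with a symmetry observation, so my plan is to execute the substitution and verify the cancellation of the factors involving the $d_a$.

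First I would assemble the input data. The formula (\ref{eqn}) derived from Lemma \ref{nctrans} together with the Gindikin--Karpelevich formula gives
$$d_{H/R^W}=\left(\prod_{a\in \Phi}\frac{e_ae_{-a}}{d_ad_{-a}}\right)^{n^2/2}\cdot d_{R/R^W}^n,$$
while the Steinberg computation of the ramification divisor of $\Spec R \to \Spec R^W$ yields, pushed forward, $d_{R/R^W}=\left(\prod_{a\in \Phi} d_a\right)^n$ as a principal ideal (the adjoint hypothesis is exactly what makes $R^W$ regular so that this discriminant is well defined as a principal ideal in the sense of Definition \ref{disc}).

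The key elementary observation is that the map $a \mapsto -a$ is a bijection of $\Phi$ onto itself, so that $\prod_{a\in \Phi} d_{-a}$ equals $\prod_{a\in \Phi} d_a$ up to reordering and hence up to a unit. Consequently, as principal ideals,
$$\left(\prod_{a\in \Phi} d_a d_{-a}\right)^{n^2/2} = \left(\prod_{a\in \Phi} d_a\right)^{n^2} = d_{R/R^W}^n.$$
I would substitute both identities into (\ref{eqn}); the $d_a d_{-a}$ factor in the denominator cancels exactly with $d_{R/R^W}^n$, and the remaining expression is precisely $\left(\prod_{a\in\Phi} e_a e_{-a}\right)^{n^2/2}$, which is the desired formula.

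There is no real obstacle here: the substantive work has already been done in Lemma \ref{nctrans}, the Gindikin--Karpelevich formula, and Steinberg's computation of the ramification. The only potential subtlety to be careful about is that all of these identities are being read as equalities of principal ideals (equivalently, modulo $A^\times$), which is exactly the setting of Definition \ref{disc}, so the bookkeeping with units is harmless. Once one is comfortable with that convention, the proof reduces to the two-line computation above.
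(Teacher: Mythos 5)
Your proposal is correct and follows essentially the same route as the paper: substitute the ramification computation $d_{R/R^W}=\bigl(\prod_{a\in\Phi} d_a\bigr)^n$ into equation (\ref{eqn}) and cancel the $\prod d_a d_{-a}$ factor in the denominator against $d_{R/R^W}^n$. The only cosmetic difference is that you justify $\prod_{a\in\Phi} d_{-a}=\prod_{a\in\Phi} d_a$ via the bijection $a\mapsto -a$ of $\Phi$, whereas the paper uses the termwise identity $d_a=d_{-a}$ up to a unit; both are fine and the bookkeeping modulo $A^\times$ is handled exactly as you describe.
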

	
	\begin{remark}
		Since the zero locus of $d_{H/R^W}$ is exactly the locus where the induced representation is reducible by the considerations in Section 3, for the case of an adjoint group we retrieve Kato's result  \cite[Theorem 2.2]{Kato1981}. Notice that for an adjoint group the second condition of Kato's theorem is always true.
	\end{remark}

	\subsection{Discriminant in the non-adjoint case}
	
	If $G$ is not adjoint, $R^W$ is not regular anymore, so we need to restrict to some subalgebra $A$ that is regular. We make a canonical choice.
	
	\begin{definition}
		We identify the fundamental weights $\omega_1,\ldots,\omega_n$ with the trace function of the corresponding fundamental representations. Then, we consider the smallest integers $d_1,\ldots, d_n$ such that $\omega_i^{d_i}\in R^W$. We call $A=\mathbb{C}[\omega_1^{d_1},\ldots, \omega_n^{d_n}]\subseteq R^W$ the algebra of fundamental weights.
	\end{definition}
	
	$A$ is obviously regular as it is a polynomial algebra. 
	
	By the same procedure as in the previous subsection, equation \ref{eqn} is still true upon replacing $d_{R/R^W}$ by $d_{R/A}$, so we want to compute $d_{R/A}$. Recall that $R$ is the group algebra of the cocharacter lattice, so alternatively, it is the function ring of the dual torus $\hat{T}$. The fact that $R^W$ is regular when $G$ is adjoint comes from the fact in the adjoint case it is known that $\hat{T}\git W$ is an affine space of dimension $\mathrm{rank}(G)$, and it is given as polynomials over the trace functions corresponding to the fundamental weights.
	
	For the general case, we consider a simply connected cover $\tilde{T}$ of $\hat{T}$ such that $\hat{T}=\tilde{T}\git Z$. We define $R^+=k[\tilde{T}]$. Then $R^+$ is regular and $(R^+)^W$ is also regular.
	
	Consider the following diagram
	\[
	\begin{tikzcd}[every arrow/.append style={dash}]
		& R\arrow[r]\arrow[d] & R^+\arrow[d]\\
		& R^W\arrow[r]\arrow[ld] & (R^+)^W\\
		A & & 
	\end{tikzcd}
	\]
	We want to compute $d_{R/A}$. By Lemma \ref{transd}, it is enough to compute the discriminants
	$$d_{R^+/R}, d_{R^+/(R^+)^W}, d_{(R^+)^W/A}.$$
	We already know $d_{R^+/(R^+)^W}$. Since $\hat{T}\git W\cong \C[\omega_1,\ldots, \omega_n]$, we have 
	$$d_{(R^+)^W/A}\cong \omega_1^{d_1(d_1-1)}\cdots \omega_n^{d_n(d_n-1)}.$$
	We also have that $N_{R/A}(d_{R^+/R})$ is an invertible element. Let $n=|W|$ as before, and $[R^+:R]=r$. Then if we set $[R:A]=nd$ we have $[(R^+):A]=rd$. By using Lemma \ref{transd} we get the following theorem.
	
	\begin{proposition}\label{gendisc}
		In the general case, and for $A$ being the algebra of fundamental weights, we have
		$$d_{H/A}=\left(\prod_{a\in \Phi} e_ae_{-a}\right)^{dn^2/2}\cdot \left(\omega_1^{d_1(d_1-1)}\cdots \omega_n^{d_n(d_n-1)}\right)^{n/r}.$$
	\end{proposition}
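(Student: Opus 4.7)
The plan is to adapt the adjoint-case derivation of equation~\ref{eqn} to the subalgebra $A$, and then compute the commutative discriminant $d_{R/A}$ via two applications of Lemma~\ref{transd} to towers built from the simply-connected cover $R^+$.

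First, I would rerun the block decomposition of the trace matrix of $H$ that produced equation~\ref{eqn}, but now using $\{I_{w_i}\pi^{\mu_j}\}$ as a free $A$-basis of $H$. The Gindikin--Karpelevich identity still gives $p_i = I_{w_i}I_{w_i^{-1}} = \prod_{a\in R_{w_i}}\tfrac{e_a e_{-a}}{d_a d_{-a}}$, and Lemma~\ref{nctrans} applied block-by-block yields $d_{H/A} = \prod_i N_{R/A}(p_i)\cdot d_{R/A}^n$ up to units. Collapsing the norm factors via $\prod_i p_i = (\prod_{a\in\Phi}\tfrac{e_a e_{-a}}{d_a d_{-a}})^{n/2}$, which is $W$-invariant, and using the transitivity $N_{R/A}=N_{R^W/A}\circ N_{R/R^W}$ together with Lemma~\ref{normprop} (which gives $N_{R/R^W}(x)=x^n$ on $W$-invariants and picks up an additional factor of $d=[R^W:A]$ through $N_{R^W/A}$) produces
\[
\prod_i N_{R/A}(p_i) = \Bigl(\prod_{a\in\Phi}\tfrac{e_a e_{-a}}{d_a d_{-a}}\Bigr)^{dn^2/2}.
\]

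Second, I would compute $d_{R/A}$. All four rings $A$, $R$, $(R^+)^W$, and $R^+$ are regular: $R$ is the coordinate ring of the smooth torus $\hat T$, $(R^+)^W=\C[\omega_1,\ldots,\omega_n]$ is polynomial by Chevalley, and $A$ is polynomial by construction. Hence Lemma~\ref{transd} applies to both towers in the diagram. For $R^+/R/A$, the quotient $\tilde T\to\hat T$ by the finite free $Z$-action is étale, so $d_{R^+/R}$ is a unit and we get $(d_{R/A})^r = d_{R^+/A}$ up to units. For $R^+/(R^+)^W/A$, Lemma~\ref{transd} gives
\[
d_{R^+/A} = (d_{(R^+)^W/A})^n \cdot N_{(R^+)^W/A}(d_{R^+/(R^+)^W}).
\]
The factor $d_{(R^+)^W/A}=\omega_1^{d_1(d_1-1)}\cdots\omega_n^{d_n(d_n-1)}$ is a direct factor-by-factor computation for the $d_i$-th power cover $\C[\omega_i]\supset\C[\omega_i^{d_i}]$, while the Steinberg calculation from the adjoint-case subsection, applied to the simply-connected cover, gives $d_{R^+/(R^+)^W}=(\prod_a d_a)^n$.

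To finish, I extract an $r$-th root from the two expressions for $d_{R^+/A}$, raise to the $n$-th power to obtain $d_{R/A}^n$, and substitute back into the first step. The $\prod_a d_a d_{-a}$ denominators in the norm factor cancel against the $\prod_a d_a$ factors coming from $N_{(R^+)^W/A}((\prod_a d_a)^n)$ in $d_{R/A}^n$ (using $d_{-a}\sim d_a$ up to a unit), leaving exactly the claimed
\[
d_{H/A} = \Bigl(\prod_{a\in\Phi}e_a e_{-a}\Bigr)^{dn^2/2}\cdot\Bigl(\omega_1^{d_1(d_1-1)}\cdots\omega_n^{d_n(d_n-1)}\Bigr)^{n/r}.
\]
The main obstacle is the rigorous extraction of the $r$-th root: the identity $(d_{R/A})^r=d_{R^+/A}$ holds only at the level of principal ideals, so one must verify that the right-hand side is genuinely an $r$-th power in $A$ (equivalently, that the corresponding divisor on $\Spec A$ is divisible by $r$, which should follow by tracing the construction through the $Z$-quotient $\tilde T\to\hat T$). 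A secondary difficulty is careful bookkeeping of unit factors so that the $d_a/d_{-a}$ cancellation and the Steinberg factor combine into the exponents $dn^2/2$ and $n/r$ without slack.
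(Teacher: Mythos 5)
Your proposal follows the paper's proof essentially step for step: the same block decomposition of the trace matrix via Lemma \ref{nctrans} and the Gindikin--Karpelevich identity to reduce everything to $d_{R/A}$, and the same two factorizations of $d_{R^+/A}$ (through $R$ and through $(R^+)^W$) via Lemma \ref{transd}, with $d_{R^+/R}$ contributing only a unit and the Steinberg/Chevalley computations supplying $d_{R^+/(R^+)^W}=(\prod_a d_a)^n$ and $d_{(R^+)^W/A}=\omega_1^{d_1(d_1-1)}\cdots\omega_n^{d_n(d_n-1)}$.

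One arithmetic step does not land where you claim, however. Equating the two expressions gives $(d_{R/A})^r=(d_{(R^+)^W/A})^{n}\cdot(d_{R^+/(R^+)^W})^{rd}$, so extracting the $r$-th root yields $d_{R/A}=(d_{(R^+)^W/A})^{n/r}\cdot(\prod_a d_a)^{nd}$; raising this to the $n$-th power before substituting into $d_{H/A}=\bigl(\prod_a \tfrac{e_ae_{-a}}{d_ad_{-a}}\bigr)^{dn^2/2}\cdot d_{R/A}^n$ puts the fundamental-weight factor to the power $n^2/r$, not $n/r$. The cancellation you invoke only involves the $\prod_a d_a$ factors (the $(\prod_a d_ad_{-a})^{-dn^2/2}$ in the norm term against the $(\prod_a d_a)^{dn^2}$ in $d_{R/A}^n$); it does not touch the $\omega$-exponent. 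So the computation you describe produces $\bigl(\omega_1^{d_1(d_1-1)}\cdots\omega_n^{d_n(d_n-1)}\bigr)^{n^2/r}$, and your closing assertion that it "leaves exactly" the exponent $n/r$ does not follow from your own steps. To be fair, the paper's proof exhibits precisely the same tension between its displayed intermediate equation and the stated exponent, so this is most plausibly a typo in the Proposition; but you should either derive $n^2/r$ or explain where a factor of $n$ is absorbed. Your flagged concerns about rigorously extracting the $r$-th root at the level of divisors and about unit bookkeeping are legitimate and are likewise left unaddressed in the paper.
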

	
	\begin{proof}
		By the same method as in the adjoint case,
		$$d_{H/A}=\left(\prod_{a\in \Phi}\frac{e_ae_{-a}}{d_ad_{-a}}\right)^{dn^2/2}\cdot d_{R/A}^n.$$
		By Lemma \ref{transd}, writing the discriminant $d_{R^+/A}$ in two different ways and ignoring the invertible factor $N_{R/A}(d_{R^+/R})$ we have
		$$d_{R/A}^r=(d_{R^+/(R^+)^W})^{rd}\cdot  d_{(R^+)^W/A}^n.$$
		Combining the two equations with $d_{R^+/(R^+)^W}=\left(\prod_{a\in \Phi} d_a\right)^n$ gives the result.
	\end{proof}
	
	\begin{example}[$SL_2$ case]
		For $SL_2$ the dual group is $PGL_2$, and the simply connected cover would be again $SL_2$. This gives $R^+=\mathbb{C}[x^{\pm}]$, while $ R^+\git \mathbb{Z}_2=\mathbb{C}[x^{\pm 2}].$ Then $(R^+)^W\cong  \mathbb{C}[x+x^{-1}]$ and $R^W=\mathbb{C}[x^2+x^{-2}]=A$ - this is the only simply connected group for which that is correct, since $\omega_1=x+x^{-1}$ so $\omega_1^2$ generates $R^W$.
		
		It is easy now to compute directly $d_{R^+/R}=x^2, d_{R^+/(R^+)^W}=(1-x^{-2})^2,  d_{(R^+)^W/A}=(1+x^{-2})^2$, which gives (one can also do this directly to get the same result) $d_{R/A}=(1-x^{-4})^2$ since $x^2$ is invertible. 
		
		Therefore, either by Proposition \ref{gendisc} or by direct computation, 
		
		$$d_{H/R^W}=(1-q^{-1}\pi^{a^{\vee}})^2\cdot (1-q^{-1}\pi^{-a^{\vee}})^2\cdot (1+\pi^{a^{\vee}})^4.$$
		
		As a corollary, the induced representation is irreducible if and only if one of the three factors is zero. The same result can be obtained from Kato's theorem or a direct calculation of the conditions \cite[pp. 1020]{Solleveld2021}.
	\end{example} 
	
	\appendix
	\section{The nested Quot functor}
	\subsection{Hilbert and Quot schemes}
	
	Recall that for a scheme $S$ and a flat coherent sheaf $\f$ on $\bP_S^r$ the Hilbert polynomial $h(\f,t)$ of the fiber over a point $s\in S$ is independent of $s$, see \cite[Proposition 4.2.1]{Sernesi2006}. As all functors we will consider will end up being representable, we will abuse notation and denote in the same way the functors with the schemes they represent.
	
	For a projective scheme $X$ over $S$, a coherent sheaf $E$ on $X$ and a polynomial $P(t)$, the Quot functor parametrizes flat families of quotients of $E$ with Hilbert polynomial $P(t)$ over a test scheme $Z\maps S$. More precisely, we have the following.
	
	For a coherent sheaf $E$ on $X$ and a test $S$-scheme $Z$, we denote $X_Z:=X\times_S Z$, $p_X:X_Z\maps X, p_Z:X_Z\maps Z$ the projections, and $E_Z:=p_X^* E.$ Given two quotients $q_i:E_Z\twoheadrightarrow \f_i, i=1,2$ we call them equivalent and write $\f_1\sim \f_2$ if there exists $a:\f_1\xrightarrow{\cong}\f_2$ such that $a\circ q_1=q_2$.
	
	\begin{definition}
		The Quot functor $\Quot_{E,P(t)}^{X/S}:(Sch/S)\rightarrow Sets$ assigns to a test scheme $Z\maps S$ the set of flat coherent quotients $E_Z\twoheadrightarrow\f$ of $E_Z$ and with Hilbert polynomial $P(t)$ on the fibers of $p_Z: X_Z\maps Z$ up to equivalence, ie.
		
		$$\Quot^{X/S}_{E,P(t)}(Z):=\{ E_Z\twoheadrightarrow \f, \f \ \mathrm{is \ flat \ over}\  Z  \ \mathrm{and} \ h(\f,t)=P(t)\}/ \sim,$$
	\end{definition}
	
	A proof of the following theorem by Grothendieck \cite[Théorème 3.1]{Grothendieck1961} can also be found in \cite[Theorem 4.4.1]{Sernesi2006}. It proceeds by embedding the Quot scheme in a suitable Grassmannian.
	
	\begin{theorem}\label{quot}
		$\Quot^{X/S}_{E,P(t)}(Z)$ is representable by a projective scheme over $S$.
	\end{theorem}
	
	Consider now the Hilbert functor parametrizing flat families of closed subvarieties of $X$, ie.
	$$\Hilb^{X}_{P(t)}(Z):=\{Y \subseteq X_Z\}.$$
	We view $Y$ as the flat family $Y_z\subseteq X$ of the fibers of the projection $Y\maps Z$ over points $z\in Z$.
	
	\begin{proposition}
		The functor $\Hilb^{X}_{P(t)}$ is representable by a projective scheme over $S$.
	\end{proposition}
	
	\begin{proof}
		For a closed subscheme $Y\subseteq X_Z$ we denote by $I_Y$ the ideal sheaf.
		
		We have $\Hilb^{X}_{P(t)}\cong\Quot^{X/\Spec k}_{\cO_X,P(t)},$ by the map $Y\subseteq X_Z\maps \cO_X\rightarrow \cO_X/I_Y$, and thus the proposition follows by applying Theorem \ref{quot}.
	\end{proof}
	
	Hilbert schemes, and therefore Quot schemes, can be very exotic as spaces. It is not necessarily reducible or equidimensional, even for fairly simple examples.
	
	\begin{example}
		The Hilbert scheme $\Hilb^{\bP^3}_{2(t+1)}$ is the union of a component of dimension 8 parametrizing pairs of disjoint lines and a component of dimension 11 parametrizing a pair of a conic and a point, see \cite[\S 4.6.3]{Sernesi2006}. These two components intersect at a point $X$ as in \cite[Example 4.2.3(ii)]{Sernesi2006}.
	\end{example}
	
	\begin{definition}\label{quotfull}
		For a projective scheme $X\subseteq \bP^r$, we define the Hilbert scheme of $X$ to be the union over all numerical polynomials $P(t)$ of the individual Hilbert schemes,
		
		$$\Hilb^X:=\bigsqcup_{P(t)} \Hilb^X_{P(t)}.$$
		
		Similarly for the Quot scheme we define
		
		$$\Quot_{E}^{X/S}=\bigsqcup_{P(t)}\Quot_{E,P(t)}^{X/S}$$
	\end{definition}
	
	Following \cite[\S 4.6.4]{Sernesi2006}, for a coherent sheaf $E$ on a scheme $S$, we notice that Theorem \ref{quot} implies that for the constant polynomial $P(t)=n$ we have the following.
	
	\begin{theorem}\label{quotf}
		The functor $\Quot_n(E):=\Quot_{E,n}^{S/S}$ sending a test $S$-scheme $T$ to the flat families of locally free quotients of $E$ over $T$ is representable by a projective $S$-scheme.
		
		In particular, for $E$ also locally free, the \emph{relative Grassmannian} $G_n(E):=\Quot_n(E^{\vee})$ classifying locally free subbundles of rank $n$ is a projective smooth $S$-scheme.
	\end{theorem}
	
	\begin{proof}
		Since the identity morphism is projective, $X=S$ is projective over $S$ and Theorem \ref{quot} applies. The smoothness part of the latter assertion follows from the observations above \cite[Proposition 4.6.1]{Sernesi2006}.
	\end{proof}

	\subsection{The nested Quot functor}
	
	Let $S$ be a scheme, and $X$ a scheme over $S$. As before, for a coherent sheaf $E$ on $X$ and a test $S$-scheme $Z$, we denote $X_Z:=X\times_S Z$, $p_X:X_Z\maps X, p_Z:X_Z\maps Z$ the projections, and $E_Z:=p_X^* E.$
	
	Let $\bar{P}(t)=(P_1(t),\ldots, P_n(t))$ be a sequence of numerical polynomials.
	
	\begin{definition}\label{nquotdef}
		The nested Quot functor $\mathrm{F}_n\Quot_{E,\bar{P}(t)}^{X/S}:(Sch/S)\rightarrow Sets$ assigns to a test scheme $Z\maps S$ the set of flags of flat quotients $\{E_T\twoheadrightarrow \f_1 \twoheadrightarrow \ldots \twoheadrightarrow \f_n\}$ of $E_Z$ and with Hilbert polynomial $P_i(t)$ on the fibers of $p_Z:X_Z\maps Z$ up to compatible isomorphism, ie.
		
		$$\Quot^{X/S}_{E,P(t)}(Z):=\{ E_Z\twoheadrightarrow \f_1 \twoheadrightarrow \ldots \twoheadrightarrow \f_n, \f_i \ \mathrm{is \ flat \ over}\  Z  \ \mathrm{and} \ h(\f_i,t)=P_i(t)\}/ \sim,$$
	\end{definition}
	
	\begin{theorem}\label{nquot}
		The nested Quot functor $\mathrm{F}_n\Quot_{E,\bar{P}(t)}^{X/S}:(Sch/S)\rightarrow Sets$ parametrizing flat families of nested quotients of $E$ of length $n$, is representable by a projective scheme over $S$.
	\end{theorem}
	
	Let $FQ_n:=\mathrm{F}_n\Quot_{E,\bar{P}(t)}^{X/S}.$
	
	The idea of the proof, as in \cite[\S 1.1]{Monavari2022}, is that we have a natural morphism of functors 
	\begin{eqnarray*}
		FQ_n(Z)& \maps & \prod_{i=1}^n \Quot^{X/S}_{E,P_i(t)}(Z)\\
		(E_Z\twoheadrightarrow \f_1 \twoheadrightarrow \ldots \twoheadrightarrow \f_n) & \mapsto & \left(E_Z\twoheadrightarrow \f_1, E_Z\twoheadrightarrow \f_2,\ldots, E_Z\twoheadrightarrow \f_n \right)
	\end{eqnarray*}
	Let $\phi_i: E_Z\twoheadrightarrow \f_i$ be the $i$-th component of the right hand side. Then the image is given by the closed condition $$\ker \phi_1\hookrightarrow \ker \phi_2\hookrightarrow \ldots \hookrightarrow \ker \phi_n.$$
	
	\begin{proof}
		For the case $n=1$, this is Grothendieck's classical Quot scheme, \cite[Théorème 3.1]{Grothendieck1961}, see another proof in \cite[Theorem 4.4.1]{Sernesi2006}. We will show it for the case $n=2$ and the rest follows by induction.
		
		Let $Q_i=\Quot^{X/S}_{E,P_i(t)}$, $i=1,2$, $\un_i$ the universal quotients corresponding to the identity morphism under the identification of $\Hom(Q_i,Q_i)=Q_i(Q_i)$, and $Q=Q_1\times_S Q_2$. Let $p_i$ be the projections.
		
		We consider the injective morphism of functors 
		\begin{eqnarray*}
			FQ_2  \maps  Q 
		\end{eqnarray*}
		given by
		\begin{eqnarray*}
			FQ_2(Z)& \maps & Q(Z)\\
			(E_Z\twoheadrightarrow \f_1 \twoheadrightarrow \f_2) & \mapsto & \left(E_Z\twoheadrightarrow \f_1, E_Z\twoheadrightarrow \f_2\right)
		\end{eqnarray*}
		
		We have short exact sequences	
		\begin{eqnarray*}
			0 \maps N_i\maps E_{Q_i}\maps \un_i \maps 0
		\end{eqnarray*}
		which after pullback and by observing $p_1^*E_{Q_2}=p_2^*E_{Q_1}=E_Q$ provide us sequences
		\begin{eqnarray*}
			0 \maps p_i^*N_i\maps E_{Q}\maps p_i^*\un_i \maps 0
		\end{eqnarray*}
		Let $\phi$ be the composition
		\begin{eqnarray*}
			p_1^*N_1\maps E_Q \maps p_2^*\un_2
		\end{eqnarray*}  
		We define $F=D_0(\phi)$ to be the vanishing scheme of $\phi$, see \cite[Example 4.2.8]{Sernesi2006}. By \emph{loc. cit.}, it is a closed subscheme of $Q$, and thus also projective.
		
		We claim that a morphism $Z\maps Q$ defines an element of $FQ_2$ if and only if it factors through $F$, which proves $FQ_2$ is represented by $F$.
		
		Indeed, by the defining property of the vanishing scheme, $f$ factors through $F$ if and only if $f^*(\phi)=0$, if and only if the kernel of $E_Z\twoheadrightarrow \f_1$ maps to zero under $E_Z\twoheadrightarrow \f_2$. This is precisely the condition to have a nested surjection $E_T\twoheadrightarrow \f_1 \twoheadrightarrow \f_2.$
		
		Since $FQ_2=F$ is a closed subscheme of a projective $S$-scheme, it is itself projective over $S$.
	\end{proof}
	
	\printbibliography
\end{document}